\documentclass[a4paper,12pt]{amsart}
\usepackage[T1]{fontenc}
\usepackage[utf8]{inputenc}
\usepackage[right=1.5cm,left=1.5cm,height=20cm]{geometry}
\usepackage{amsmath,amssymb,mathabx,verbatim}
\usepackage{tikz-cd}
\usepackage{caption}
\usepackage{array}
\usepackage{float}
\usepackage{lscape}
\usepackage{tikz}
\usepackage{amsthm}
\usepackage{cancel}
\setlength\extrarowheight{1pt}
\usepackage{appendix}
\usepackage[
backend=biber,
style=alphabetic,
maxalphanames=4,
maxbibnames=4,
isbn=false,
url=false,
giveninits=true,
doi=false,
]{biblatex}
\usepackage{csquotes}
\addbibresource{biblio.bib}
\AtBeginBibliography{\footnotesize}
\usepackage[bookmarks=true]{hyperref}
\usepackage{booktabs}
\usepackage[capitalise]{cleveref}
\usepackage{graphicx} 
\usepackage{xcolor}

\makeatletter
\@namedef{subjclassname@2020}{\textup{2020} Mathematics Subject Classification}
\makeatother

\DeclareUnicodeCharacter{0301}{\'{e}}



\newcommand{\QQ}{\mathbb Q}
\newcommand{\ZZ}{\mathbb{Z}}

\newcommand{\RR}{\mathbb R}
\newcommand{\FF}{\mathbb F}

\newcommand{\PP}{\mathbb P}

 
\newcommand{\End}{\mathrm{End}}

\newcommand{\Aut}{\mathrm{Aut}}
\newcommand{\GL}{\mathrm{GL}}
\newcommand{\Frob}{\mathrm{Frob}}
\newcommand{\GLn}{\mathrm{GL}_2(\mathbb Z/n\mathbb Z)}

\newcommand{\Id}{\mathop{\mathrm{Id}}}


\newcommand{\ns}{\textnormal{ns}}

\renewcommand{\theequation}{\arabic{section}.\arabic{equation}}

\newtheorem*{thm*}{Theorem} 
\newtheorem{thm}[equation]{Theorem}
\newtheorem*{prop*}{Proposition} 
\newtheorem*{lem*}{Lemma} 
\newtheorem{lem}[equation]{Lemma} 
\newtheorem*{cor*}{Corollary} 
\newtheorem{cor}[equation]{Corollary} 

\newtheoremstyle{named}{}{}{\itshape}{}{\bfseries}{.}{.5em}{#3}
\theoremstyle{named} 
\theoremstyle{remark}
\newtheorem*{rem*}{Remark} 
\newtheorem{ese}[equation]{Example} 
\theoremstyle{definition}
\newtheorem*{defi*}{Definition} 

\title{Maximal curves over finite fields and a modular isogeny}

\author{Valerio Dose}
\address{Valerio Dose, Department of Computer, Control and Management Engineering, Sapienza University of Rome, Roma, Italy}
\email{valerio.dose@uniroma1.it}
\author{Guido Lido}
\address{Guido Lido, Department of Mathematics, University of Rome ``Tor Vergata'', Roma, Italy}
\email{guidomaria.lido@gmail.com}
\author{Pietro Mercuri}
\address{Pietro Mercuri, Department of Mathematics, University of Trento, Trento, Italy}
\email{mercuri.ptr@gmail.com}
\author{Claudio Stirpe}
\address{Claudio Stirpe, Convitto Nazionale ``R. Margherita'', Anagni, Italy}
\email{clast@inwind.it}

\subjclass[2020]{11G20,11G18,11T71,14G35}
\keywords{many points, finite fields, modular curves, Chen's isogeny, Cartan subgroups, Hecke operators}

\begin{document}
\begin{abstract}
We prove the existence of curves of genus $7$ and $12$ over the field with $11^5$ elements, reaching the Hasse-Weil-Serre upper bound.  
These curves are quotients of modular curves and we give explicit equations. 
We compute the number of points of many quotient modular curves in the same family without providing equations. For various pairs (genus, finite field) we find new records for the largest known number of points. In other instances we find quotient modular curves that are maximal, matching already known results.
To perform these computations, we provide a generalization of Chen's isogeny result.
\end{abstract}

\maketitle

\section{Introduction}
The Hasse-Weil-Serre bound prescribes that
\begin{equation}\label{eq:Hasse-Weil_bound}
|\# X(\mathbb F_{q})-q-1|\le g\lfloor 2\sqrt{q}\rfloor
\end{equation}
for every finite field $\mathbb F_q$ and every smooth, projective, absolutely irreducible algebraic curve $X$ over $\FF_q$ of genus $g$. 
We provide explicit equations for two smooth projective curves over $\QQ$, of genus $7$ and $12$ respectively, which have good reduction at $11$ and the largest possible number of points over the finite field with $11^5$ elements: they have  exactly $166666$ and $170676$ points $\FF_{11^5}$-points, respectively, achieving the Hasse-Weil-Serre bound.

The interest in explicit examples of curves with a number of points close to the Hasse-Weil-Serre bound is also motivated by their application to coding theory (see for example \cite[Section 8.4]{Sti09}).
The website \cite{manypoints} is devoted to collecting, for every choice of the genus $g\le 50$ and $q=p^k$ with $p<20$ a prime and $k\le 5$, the current bounds about how large $\# X(\mathbb F_q)$ can be.
Among the most recent efforts to populate this database, we mention \cite{Howe2017}, \cite{BGK20}, \cite{GMQ23}.

Certain families of modular curves have already been used in \cite{DLMS23} to produce many examples that improve the previously known lower bounds. The maximal curve of genus $12$ over $\mathbb F_{11^5}$ described in \Cref{sc:computing-equation} was found among those examples, even though explicit equations were not given.

In this work we extend the reach of the methods in \cite{DLMS23} to a larger family of modular curves, finding new records and several examples of maximal curves, among which the new maximal curve of genus $7$ over $\mathbb F_{11^5}$.

This instance and the one with genus $12$ are particularly surprising, because they arise over an odd extension of a field with a prime number of elements. Indeed, modular curves are generally known to have many points over even extensions because of the presence of rational points associated to supersingular elliptic curves, which do not all show over an odd extension. 
We compute explicit equations for the two maximal curves by studying the behavior of the automorphisms on the modular forms involved. 

In our study, we consider modular curves of Borel-Cartan type and their quotients by all subgroups made of Atkin-Lehner involutions or analogous involutions in the Cartan case. We refer to all these automorphisms as \emph{Atkin-Lehner type involutions}. 
Curves in this family have already been the subject of a lot of recent studies because of their relation to open questions in the arithmetic of elliptic curves, see for example \cite{Xnsp27}, \cite{FL23}, \cite{frengley2023congruences}, \cite{DLM22}, \cite{M-R22}, \cite{EP21}, \cite{BDMTV19}, \cite{Dos16}, \cite{KohenPacetti}, \cite{DFGS14}.
Quotients, in particular, can result in better instances from the point of view of the number of points with respect to the genus, as observed for example in \cite{CO25}.
Concerning the explicit computation of modular forms and explicit equations for modular curves, we mention the recent works \cite{AAL23}, \cite{LMFDBpaper}, \cite{Zyw21}, \cite{MS20},\cite{DMS19}, \cite{Mer18}, \cite{Bar13} as well as the more classical \cite{Shi95} and \cite{Gal99}. 

The paper is organized as follows. In \cref{sc:BC-curves} we recall the definition of modular curves of Borel-Cartan type, and we give an isogeny result of Chen type for the full family of those curves and their quotients by automorphisms of Atkin-Lehner type, generalizing \cite{Che98}, \cite{dSE00}, \cite{Che04}, \cite{DLM22} and \cite{DLMS23}. 
In \cref{sc:computing-points} we recall how to compute the number of points over finite fields, using the new isogeny result, and we list all the improved lower bounds. In \cref{sc:computing-equation} we 
compute equations for 
the two new maximal curve of genus 7 and 12. 
In \cref{sc:other}, we comment on all the maximal curves in the family in the range of our search. 
The results of our computations are summarized in the appendices: in \cref{app:bestresults} we report all the best results we found for each pair (genus, finite field),
while in \cref{app:maxcurves} we list all the maximal curves of genus larger than $1$, in the range of our search.

As an extra, for $X$ a curve with a subgroup $G \cong (\mathbb Z/2\mathbb Z)^r$ of the automorphism group, we give in \cref{sc:appendix_genus} some general results on the Jacobians of the quotient curves $X/W$, with $W$ a subgroup of $G$. From this, we deduce formulas that relate the genera and the number of points over finite fields of quotients of $X$ by different subgroups of $G$. These results can be relevant in the context of modular curves and their quotients, which in various cases have automorphism group isomorphic to $(\mathbb Z/2\mathbb Z)^r$ (see \cite{AS90}, \cite{Bar08},\cite{BH03} and \cite{DLM22}).

\subsection*{Acknowledgments}
We thank Jeremy Rouse for the helpful discussion and the whole LMFDB team for their remarkable work, which we also use here.

The second and the third authors are supported by the ``National Group for Algebraic and Geometric Structures, and their Applications" (GNSAGA - INdAM).
The second author is also supported the MIUR Excellence Department Project MatMod@TOV awarded to the Department of Mathematics, University of Rome Tor Vergata,  by the ``Programma Operativo (PON) “Ricerca e Innovazione” 2014-2020'' and by Prin PNRR 2022 Mathematical Primitives for Post Quantum Digital Signatures.

\section{Modular Curves of Borel-Cartan type and their quotients}\label{sc:BC-curves}

The curves we study are quotients of modular curves. Modular curves are smooth projective (algebraic) curves over $\QQ$ that parametrize elliptic curves with a torsion structure and whose complex points are somewhat easy to describe by quotienting the complex upper half-plane. We refer to \cite{DS05}, \cite{DR73} or to the small summary in the first section of \cite{DLM22}, for more details.

Given $n_0,n_\ns \in \ZZ_{>0}$ coprime and $n= n_0n_\ns$, let 
\[
X(n_0,n_\ns) = X_H,
\]
be the modular curve associated to 
\[
H= B_0(n_0) \times C_{\ns}(n_{\ns}) \quad \text{ inside } \quad \GLn = \GL_2(\ZZ/n_0\ZZ)\times \GL_2(\ZZ/n_\ns\ZZ),
\]
where $B_0$ is a choice of a Borel subgroup and $C_{\ns}$ is a choice of a non-split Cartan subgroup. By the general theory of modular curves, $X_H$ has good reduction over all primes $p\nmid n$ (see \cite{DR73} for example).
We say that these modular curves are of \textit{Borel-Cartan (mixed) type}, see \cite{DLMS23} for more details.  
The curves $X(n_0,n_\ns)$ also appeared in \cite{KohenPacetti} with the notation $X_{\ns}^{\Vec\varepsilon}(n_\ns,n_0)$, where $\Vec\varepsilon$ is a vector of non-square elements of $\ZZ/p\ZZ$ for each prime $p$ dividing $n_\ns$.

For every exact divisor $k$ of $n=n_0n_{\ns}$ (i.e., a divisor $k$ such that $k$ and $\frac nk$ are coprime), the curve $X(n_0,n_\ns)$ has an involution $W_k$  which we call \emph{Atkin-Lehner type} involution, defined as follows:
\begin{itemize}
\item For $k$ dividing $n_0$, we define $W_k$ as a lift of the classical Atkin-Lehner involution on $X_0$ as follows: we represent a non-cuspidal point on $X(n_0, n_\ns)$ as an elliptic curve $E$ together with cyclic subgroups $C_k$ of $E[k]$ and $C_{n_0/k}$ of $E[n_0/k]$ (they form the cyclic subgroup $C = C_k \oplus C_{n_0/k}$ of $E[n_0] = E[k] \oplus E[n_0/k]$) and together with an isomorphism $\phi \colon (\ZZ/n_\ns\ZZ)^2 \cong E[n_\ns]$ (up $C_\ns(n_\ns)$). Then we define  
\[
W_k(E, C_k, C_{n_0/k}, \phi) = (E/C_k, E[k]/C_k, \pi(C_{n_0/k}),  \pi\circ \phi )
\]
where $\pi$ is the projection $E \to E/C_k$.
\item For $k$ a prime power dividing $n_\ns$, we recall that under Chinese Remainder Theorem we have $C_{\ns}(n_\ns)  = C_{\ns}(k) \times C_{\ns}(n_\ns/k)$, and analogously for the normalizer $C_\ns^+(n_\ns)$. Moreover, since $k$ is a prime power, $C_\ns^+(k)/C_\ns(k)$ has two elements. Then we take the non-trivial element $w_k\in C_\ns^+(k)/C_\ns(k) < C_\ns^+(n_\ns)/C_\ns(n_\ns)$ and we define 
\[
W_k(E, C_{n_0}, \phi) = (E, C_{n_0}, \phi\circ w_k), 
\]
for all non-cuspidal points $(E, C_{n_0}, \phi)$ on $X(n_0, n_\ns)$. Moreover, in this case the quotient curve $X_H/\langle W_k\rangle$ is still of the form $X_{H'}$ for a suitable subgroup $H'$ of $\GLn$ containing $H$. 
\item For a general $k$ we extend the definition multiplicatively: each exact divisor $k$ is uniquely the product of an exact divisor $k_0\mid n_0$ and of pairwise coprime prime powers $q_i\mid n_\ns$ and we define 
$W_k = W_{k_0}\circ W_{q_1} \circ\cdots \circ W_{q_r}$ which does not depend on the order.
\end{itemize}
The above Atkin-Lehner type involutions have, indeed, order~$2$. They are defined over $\QQ$ (this follows from the definition of $X_H$ as coarse moduli space) and generate a subgroup of $\Aut(X(n_0,n_\ns))$ isomorphic to $(\ZZ/2\ZZ)^{r}$ where $r$ is the number of primes dividing $n_0n_\ns$. One can consider the quotients
\[
X(n_0,n_\ns)/\langle W_{k_1},\dots,W_{k_t}\rangle
\]
with $k_1,\dots,k_t$ dividing exactly $n$. In the following theorem we compute an isogeny decomposition of the Jacobian of these curves, generalizing \cite[Theorem 3.8]{DLMS23}.
\begin{thm}\label{pr:chen_quotients_all}
Given a curve $Y=X(n_0,n_\ns)$, let $W$ be the group of its Atkin-Lehner type involutions. For each subgroup $K<W$ the Jacobian of the corresponding quotient satisfies the following isogeny relation over $\QQ$
\[
\mathrm{Jac}(Y/K)\sim \textstyle\prod_f A_f^{m_f},
\]
where $f$ ranges among the newforms of weight $2$ and level $d_0d_{\ns}^2$ for divisors $d_0{\mid} n_0$ and $d_{\ns}{\mid} n_{\ns}$ and the multiplicities are as follows: let $K^\perp$ be the set of characters of $W$ that are trivial on $K$ and, for each $f$, denote $v:=\mathrm{val}_{p}(n_0/d_0)$, then
\[
m_f:=\sum_{\chi\in K^\perp} \left(  \prod_{p^e {\mid}{\mid} n_\ns}  \left(\tfrac12+\tfrac12\varepsilon_{f,p^e}\chi(W_{p^e}) \right) \cdot \prod_{p^e {\mid}{\mid} n_0} \left(\tfrac{v+1}{2}+\varepsilon_{f,p^e}\chi(W_{p^e})\tfrac{1+(-1)^{v}}{4} \right)  \right),
\]
with the indexes $p^e$ varying among all prime powers dividing exactly $n_0$ or $n_\ns$ and $\varepsilon_{f,p^e}$ being the eigenvalue of $W_{p^e}$ applied to $f$.
\end{thm}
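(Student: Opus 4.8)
The plan is to translate the statement into a computation of $W$-invariants on a space of modular forms and then reduce it to a purely local calculation at each prime dividing $n$. First I would recall the standard fact that, up to isogeny over $\QQ$, forming the quotient by $K$ amounts to projecting onto the $K$-invariant part: the quotient map $Y\to Y/K$ induces an isogeny
\[
\mathrm{Jac}(Y/K)\sim \mathrm{im}\Big(\tfrac{1}{|K|}\textstyle\sum_{w\in K} w_*\Big)\subset \mathrm{Jac}(Y),
\]
where $w_*$ denotes the action induced on the Jacobian. Since $W\cong(\ZZ/2\ZZ)^{r}$ has only $\QQ$-rational characters (with values in $\{\pm 1\}$), the idempotents $\tfrac{1}{|W|}\sum_{w}\chi(w)w_*$ lie in $\QQ[W]\subset\End_\QQ(\mathrm{Jac}(Y))\otimes\QQ$. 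Hence $\mathrm{Jac}(Y)$ splits over $\QQ$ into $\chi$-isotypic pieces for $\chi$ ranging over the characters of $W$, and $\mathrm{Jac}(Y/K)$ is isogenous over $\QQ$ to the sum of those pieces with $\chi$ trivial on $K$, that is with $\chi\in K^\perp$.

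Next I would invoke the isogeny decomposition of $\mathrm{Jac}(Y)$ itself --- the generalized Chen isogeny extending \cite[Theorem~3.8]{DLMS23} --- writing $\mathrm{Jac}(Y)\sim\prod_f A_f^{M_f}$, the product over newforms $f$ of weight $2$ and level $d_0d_\ns^2$ with $d_0\mid n_0$, $d_\ns\mid n_\ns$, and $M_f$ the $\QQ_f$-dimension of the subspace $V_f\subset S_2(Y)$ generated by $f$ under the degeneracy maps. The point is that the Atkin-Lehner type involutions $W_{p^e}$ commute with the Hecke operators away from $p$ and act $\QQ_f$-linearly on $V_f$, making $V_f$ a $\QQ_f[W]$-module. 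Decomposing $V_f=\bigoplus_\chi V_f^\chi$ into character spaces, the multiplicity of $A_f$ in $\mathrm{Jac}(Y/K)$ becomes $m_f=\sum_{\chi\in K^\perp}\dim_{\QQ_f}V_f^\chi$, so it suffices to compute each $\dim_{\QQ_f}V_f^\chi$ and match it with the displayed product.

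The structural input that produces the product shape is that $V_f$ is a tensor product of local oldform spaces, $V_f=\bigotimes_{p^e\|n}V_{f,p}$, on which $W=\prod_p\langle W_{p^e}\rangle$ acts factor by factor, giving $\dim_{\QQ_f}V_f^\chi=\prod_p\dim_{\QQ_f}V_{f,p}^{\chi_p}$ with $\chi_p$ the restriction of $\chi$ to $\langle W_{p^e}\rangle$. This reduces everything to two local eigenspace computations. At a Borel prime $p^e\|n_0$, the classical theory of oldforms shows $V_{f,p}$ has dimension $v+1$ with $v=\mathrm{val}_p(n_0/d_0)$, spanned by the degeneracy images $f(p^j\tau)$ for $0\le j\le v$, and that after normalizing these images $W_{p^e}$ acts as $\varepsilon_{f,p^e}$ times the reversal $j\mapsto v-j$; diagonalizing this signed reversal yields the $\chi_p(W_{p^e})$-eigenspace dimension $\tfrac{v+1}{2}+\varepsilon_{f,p^e}\chi(W_{p^e})\tfrac{1+(-1)^{v}}{4}$, with the parity of $v$ distinguished exactly as in the statement. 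At a Cartan prime $p^e\|n_\ns$, the generalized Chen isogeny gives that $V_{f,p}$ is one-dimensional and that $W_{p^e}$ (the nontrivial class of $C_\ns^+(p^e)/C_\ns(p^e)$) acts there by $\varepsilon_{f,p^e}$, so the $\chi_p(W_{p^e})$-eigenspace has dimension $\tfrac12+\tfrac12\varepsilon_{f,p^e}\chi(W_{p^e})$. Multiplying the local dimensions over all primes and summing over $\chi\in K^\perp$ reproduces precisely the claimed formula for $m_f$.

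The main obstacle is the Cartan prime-power case: establishing the generalized Chen isogeny at $p^e\|n_\ns$ for $e\ge 1$, namely that the non-split Cartan level structure contributes a single copy of each relevant newform of local level $p^{2e'}$ and that the involution $W_{p^e}$ coming from $C_\ns^+(p^e)/C_\ns(p^e)$ acts there with eigenvalue $\varepsilon_{f,p^e}$. This is exactly the new input beyond \cite{Che98} and \cite{DLMS23}, where the analogous statement is available for primes; the prime-power refinement requires controlling the degeneracy maps between Cartan curves of consecutive levels and the compatibility of $W_{p^e}$ with the classical Atkin-Lehner operator on $J_0(p^{2e'})$ under Chen's correspondence. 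By comparison, verifying the reversal shape of the Atkin-Lehner action in the Borel case, and checking that all identifications are $\QQ$-rational so that the final isogeny holds over $\QQ$, are comparatively routine.
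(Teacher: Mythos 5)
Your opening reduction --- identifying $\mathrm{Jac}(Y/K)$ up to $\QQ$-isogeny with the sum of the $\chi$-isotypic pieces of $\mathrm{Jac}(Y)$ for $\chi\in K^\perp$, so that $m_f=\sum_{\chi\in K^\perp}d_\chi$ with $d_\chi$ the multiplicity of the character $\chi$ in the action of $W$ on $H^0(A_f,\Omega^1)^{n_f}$ --- coincides with the first half of the paper's proof. After that the routes diverge, and the divergence is where your gap lies. The paper performs no local computation at all: it observes that the vectors $d=(d_\chi)_\chi$ and $d'=(d'_\chi)_\chi$ (the claimed products) live in $\RR^S$ with $S=\{\pm1\}^{\#Q}$, where $Q$ is the set of basic involutions $W_{p^e}$; that the indicator vectors $\tilde Z$ of the sets $\{s: s(q)=1\ \forall q\in Z\}$, for $Z\subset Q$, form a basis of $\RR^S$; and that $\langle d,\tilde Z\rangle$ is by definition the multiplicity of $A_f$ in $\mathrm{Jac}\bigl(Y/\langle W_q: q\in Z\rangle\bigr)$, which is exactly the quantity computed by \cite[Theorem 3.8]{DLMS23} and which one checks equals $\langle d',\tilde Z\rangle$. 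So the theorem follows from the \emph{statement} of \cite[Theorem 3.8]{DLMS23} plus linear algebra.

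Your route instead requires strictly more than that statement provides: a $W$-equivariant tensor decomposition of $V_f$ into local factors $V_{f,p}$, together with the action of each $W_{p^e}$ on its factor. That is the internal machinery of the proof in \cite{DLMS23}, not a citable result, and your proposal does not establish it; this is the genuine gap. You also misplace the novelty: the Chen-type isogeny at Cartan prime powers, which you single out as ``the new input beyond \cite{Che98} and \cite{DLMS23}'', is already contained in \cite[Theorem 3.8]{DLMS23} --- its levels $d_0d_\ns^2$ with $d_\ns\mid n_\ns$ allow arbitrary, in particular prime-power, $n_\ns$, and both the statement and the proof of \Cref{pr:chen_quotients_all} rely on it. What is actually new here is the passage from subgroups generated by the basic $W_{p^e}$ to arbitrary subgroups $K<W$, such as $\langle W_6\rangle=\langle W_2W_3\rangle$ for the genus-$7$ maximal curve; that is precisely the step your plan delegates to the unproven tensor structure, and the one the paper's pairing argument settles with no new local input. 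A secondary gap: the ``signed reversal'' description of $W_{p^e}$ on the Borel old space (extending \cite[Equations (5.1) and (5.2)]{AL70}) is classical only when $p$ divides the level of $f$, i.e. $v<e$; in the case $v=e$ (so $p\nmid d_0$), which is exactly the case where the displayed factor depends on $\varepsilon_{f,p^e}$ when $v$ is even, the action on the degeneracy images and even the meaning of $\varepsilon_{f,p^e}$ require a separate argument and a convention, which you do not supply. Granting \cite[Theorem 3.8]{DLMS23} as a black box, as you in effect do, the efficient way to close your gap is the paper's inversion argument rather than any further local analysis.
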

\begin{proof}
    Let $A$ be the Jacobian of $Y$. Then $W$ acts on $A$ and the Jacobian of $Y/K$ is the connected component of $0$ in the fixed locus of $K$ in $A$, which we denote $A^K$.    
    By \cite[Theorem 3.8]{DLMS23} $A$ is isogenous to a product $\prod_f{A_f^{n_f}}$ with the same newforms $f$ appearing in the statement and $n_f$ suitably defined. Since the $A_f$'s are all simple and non-isogenous over $\QQ$, the action of $W$ on $A$ acts separately on each $A_f^{n_f}$. In particular the Jacobian of $Y/K$ is isogenous over $\QQ$ to the product of the fixed loci $(A_f^{n_f})^K$. Since the $A_f$'s are simple, each $(A_f^{n_f})^K$ is isogenous to a power $A_f^{m_f}$. We are left to compute the multiplicities $m_f$.

    Each subvariety $A_f^{n_f}$ is a module for the anemic Hecke algebra $\mathbb T$ (i.e., the algebra of Hecke operators whose indices are coprime with the level) of $A_f$ and the action of $\mathbb T$ commutes with $W$. In particular the space of $1$-forms on 
    $$V := H^0(A_f, \Omega^1)^{n_f}$$ 
    is a module both for $\mathbb T$ and for $W$, and the multiplicity $m_f$ is simply the rank, as $\mathbb T$-module, of the fixed locus $V^K$. 
    Since the irreducible representations of $W$ are characters, then $V$ has a $\mathbb T$-basis $f_1, \ldots, f_{n_f}$ with respect to which $W$ acts diagonally (such $f_i$ can be found explicitly extending \cite[Equations (5.1) and (5.2)]{AL70}):
    for each $i$, the group $W$ acts on $\mathbb T \cdot f_i$ though a certain character $\chi\colon W \to \{\pm1\}$. For each character $\chi$ we denote $d_\chi$ the number of $f_i$'s such that $W$ acts on $\mathbb T \cdot f_i$ with the character $\chi$. Then we have
    \begin{equation}\label{mfd_chi}
    m_f = \mathrm{rank}_{\mathbb T} V^K = \sum_{\chi\in K^\perp} d_\chi.
    \end{equation}
    Therefore, to prove our theorem it is enough to prove
    \begin{equation}\label{d_chi}
    d_\chi = d'_\chi, \quad \text{with }\quad d_\chi':= \prod_{p^e {\mid}{\mid} n_\ns}  \left(\tfrac12+\tfrac12\varepsilon_{f,p^e}\chi(W_{p^e}) \right) \cdot \prod_{p^e {\mid}{\mid} n_0} \left(\tfrac{v+1}{2}+\varepsilon_{f,p^e}\chi(W_{p^e})\tfrac{1+(-1)^{v}}{4}  \right).
    \end{equation}
    We prove it with \cite{DLMS23} and some linear algebra.
    
    Let $Q $ be the set of Atkin-Lehner type involutions $W_{p^e}$ for $p^e{\mid}{\mid} n_0 n_\ns$: then $Q$ generates $W$ and
    \[
    W \cong (\ZZ/2\ZZ)^Q \cong (\ZZ/2\ZZ)^{\#Q}.
    \]
Hence the characters of $W$ are in bijection with $S := \{\pm 1 \}^{\#Q}$ since each character of $W$ is determined by its value on each involution $W_{p^e}$. In particular the vectors $d := (d_\chi)_\chi$ and $d' := (d'_\chi)_\chi$ live in the vector space $\RR^S \cong \RR^{2^{\#Q}}$.
Now for each subset $Z \subset Q$ consider $\tilde Z \in \RR^S$ the vector only containing $0$'s or $1$'s as follows: for $s\in S$ we have $\tilde Z_s = 1$ if and only if $s$ satisfies $s(q) =1$  for each $q$ in $Z$. Then, by induction on $\#Q$, the set $\{\tilde Z: Z \subset Q\}$ is a basis for $\RR^S$. Hence, denoting by $\langle\cdot,\cdot \rangle$ the standard scalar product on $\RR^S$, determining the vector $d$ is equivalent to determining $\langle d, \tilde Z \rangle$ for each subset $Z\subset Q$. In particular to prove Equation \eqref{d_chi} it is enough to prove $\langle d, \tilde Z \rangle = \langle d', \tilde Z \rangle$ for each $Z$. By definition of $\tilde Z$ we have
\[
\langle d, \tilde Z \rangle = \sum_{\substack{\chi(W_q)=1 \\ \forall q\in Z}} d_\chi  =  
\sum_{\langle W_q:q\in Z\rangle^\perp} d_\chi
\]
which, by Equation \eqref{mfd_chi}, is the multiplicity of $A_f$ in $Y/\langle W_q:q\in Z\rangle$. This multiplicity is computed in \cite[Theorem 3.8]{DLMS23} and it is exactly equal to $\langle d', \tilde Z \rangle$.
\end{proof}

\section{Computing the number of points}\label{sc:computing-points}

The curve $X = X(n_0,n_\ns)/\langle W_{k_1},\dots,W_{k_t}\rangle$ is defined over $\QQ$ and has smooth reduction for all primes $p\nmid n_0n_{\ns}$, in fact the full level curve $X(n)$ has good reduction by \cite[Theorem IV.6.7]{DR73}, while \cite[Corollary 4.10]{liulorenzini} or \cite[Th\'{e}or\`{e}me 2.2]{You93} or \cite[Lemma 5.1]{NS96} take care of the quotient by $H$ and by $\langle W_{k_1},\dots,W_{k_t}\rangle$.

\Cref{pr:chen_quotients_all} allows to compute its number of points over a finite extension of $\FF_p$, starting from the knowledge of the newforms appearing in the decomposition of \Cref{pr:chen_quotients_all}.
In more detail, for $q$ a power of $p$, one can compute $\# X(\FF_{q})$  applying \cite[Algorithm 4.5]{DLMS23}, after substituting $m_f$ with the multiplicities in \Cref{pr:chen_quotients_all}.

Although we refer to \cite{DLMS23} for the algorithm, we show the procedure in a couple of examples.
\begin{ese}\label{ex_max}
Let's consider the curve 
$X = X(6,7)/\langle W_6,W_7\rangle$. We follow the notation of \Cref{pr:chen_quotients_all}. Listing the possible positive divisors $d_0$ of $6$ and $d_{\ns}$ of $7$ we have $d_0d^2_{\ns}\in\{1,2,3,6,49,98,147,294\}$.
Since there are no cusp forms of weight $2$ and level $14$ or less than $10$, we only consider $d_0d^2_{\ns}\in\{49,98,147,294\}$ corresponding to 15 Galois conjugacy classes of forms.
The set $K^\perp$ only contains $(1,1,1)$ and $(-1,-1,1)$, where $(k_1,k_2,k_3)$ corresponds to the values of the associated character on $(W_2,W_3,W_7)$. 
Since $\chi(W_7)=1$ for all $\chi\in K^\perp$, then we have $m_f = 0$ for each form $f$ having negative eigenvalue with respect to $W_7$ (i.e., having $\varepsilon_{f,7}=-1$).
Thus $6$ Galois conjugacy classes of forms are left to consider. For every $f$ of level $294$, we have $\varepsilon_{f,2}\chi(W_2)=\varepsilon_{f,3}\chi(W_3)=-1$, hence again $m_f=0$. Thus only 4 Galois conjugacy classes of forms $\{f_1,f_2,f_3,f_4\}$ need to be considered, one of level $98$ and three of level $147$, and each has $m_f=1$. 
The associated abelian varieties $A_f$ have dimension 2 except one which has dimension 1, so the genus is 7.
An alternative method to compute the genus is shown in \Cref{ese:genus_appendix}, in \Cref{sc:appendix_genus}.

For all forms $f_i$ and their Galois conjugates (seven forms in total) the Hecke eigenvalues $a_{11}(f)$ are equal to $-2$. Hence the Frobenius $\Frob_{11}$ acting on $X$ has two eigenvalues $\alpha_1,\alpha_2$, each of multiplicity $7$, which are the roots of the polynomial $x^2-a_{11}(f)x+11 = x^2+2x+11$. 
We deduce that the number of points of $X$ over $\FF_{11}$ is
\[
\#X(\FF_{11})=11+1-\sum_{i=1}^7 (\alpha_1+\alpha_2)=12+14=26 .
\]
Similarly the number of points over $\FF_{11^5}$ is 
\[
\#X(\FF_{11^5})=11^5+1-\sum_{i=1}^7 (\alpha_1^5+\alpha_2^5)=161052+5614=166666.
\]
This is our maximal curve of genus $7$, further studied in \Cref{sc:computing-equation}.
\end{ese}
\begin{ese}\label{ex_g=7}
We consider another quotient $X(6,7)$, namely
$X = X(6,7)/\langle W_3,W_7\rangle$. 
We follow the notation of Example \ref{ex_max}.
Again we only consider the 15 Galois conjugacy classes of level $d_0d^2_{\ns}\in\{49,98,147,294\}$ but now $K^\perp=\{(1,1,1),(-1,1,1)\}$.
Since $\chi(W_3)=\chi(W_7)=1$ for all $\chi\in K^\perp$, then we have $m_f = 0$ for each form $f$ having negative eigenvalue with respect to $W_3$ and $W_7$ (i.e., having $\varepsilon_{f,3}=-1$ or $\varepsilon_{f,7}=-1$).
Thus $3$ Galois conjugacy classes of forms are left to consider: $f_1,f_2,f_3$, of level $98$, $147$ and $294$, respectively. 
Among these $m_{f_1}=m_{f_3}=1$ and $m_{f_2}=2$.
The associated abelian varieties $A_f$ have dimension 2 except the last one which has dimension 1, so the genus is 7.

For the forms $f_1$ and $f_2$ and their Galois conjugates (six forms in total) the Hecke eigenvalues $a_{11}(f)$ are equal to $-2$ but, for the last form, $a_{11}(f)=5$. Hence the Frobenius $\Frob_{11}$ acting on $X$ has the same eigenvalues $\alpha_1$ and $\alpha_2$ of Example \ref{ex_max} with multiplicity $6$ and two new eigenvalues $\beta_1$ and $\beta_2$, which are the roots of the polynomial $x^2-a_{11}(f)x+11 = x^2-5x+11$. 
We deduce that the number of points of $X$ over $\FF_{11}$ is
\[
\#X(\FF_{11})=11+1-\beta_1-\beta_2-\sum_{i=1}^6 (\alpha_1+\alpha_2)=12-5+12=19 .
\]
Similarly the number of points over $\FF_{11^5}$ is 
\[
\#X(\FF_{11^5})=11^5+1-\beta_1^5-\beta_2^5-\sum_{i=1}^6 (\alpha_1^5+\alpha_2^5)=161052+725+4812=166589.
\]
This curve already belongs to the family studied in \cite{DLMS23} and the small computation above gives the same result contained there.
\end{ese}

We implemented the algorithm described in \cite{DLMS23} (and sketched in the examples) for curves of type $X(n_0,n_{\ns})/\langle W_{k_1},\ldots,W_{k_t}\rangle$, 
and applied it to all such curves with $n=n_0 n_{\ns}^2\leq 10000$. Following \cite{manypoints}, we also restricted our analysis to genus $g\le 50$ and field $\FF_{p^k}$ with $k\le 5$ and odd characteristic $p\leq 19$, respectively $k\le 7$ for $p=2$.

We got the eigenvalues of the Hecke operators on the relevant modular forms from \cite{lmfdb}: the database contains the $q$-expansions of all the newforms of level at most $10000$.

We found $36$ curves that, according to \cite{manypoints}, achieve a new (nice) record in terms of largest known number of points over $\FF_q$ given the genus $g$. Following \cite{manypoints}, a new record is considered \emph{nice} only when it is at least $q+1+\frac{M_g(\FF_q)-q-1}{\sqrt{2}}$, where $M_g(\FF_q)$ is the current (theoretical) upper bound for the number of points on a curve of genus $g$ over $\FF_q$ (in particular $M_q(\FF_q)$ is less than or equal to the Hasse-Weil-Serre bound).
We list these $36$ record curves in \Cref{tab:record1}.

\begin{center}
\begin{tabular}{c}
$
\begin{array}{|c|c|c|c|c||c|c|c|c|c|} 
\hline 
g & q & n_0, n_{\textnormal{ns}} & k_1,\ldots,k_t & \#X(\FF_q) & g & q & n_0, n_{\textnormal{ns}} & k_1,\ldots,k_t & \#X(\FF_q) \\
\hline  
 7  &  11^5  & 6,7 & 6,7  & 166666 & 35 &   17^2  &  208,3 & 16,39  & 1152\\
 10  &  17^5  & 208,1 & 208  & 1436834 & 36 &   5^2  &  1271,1 & 1271  & 238\\
 10  &  19^2  & 319,2 & 58,319  & 703 & 36 &   17^2  & 208,3 & 208,39  & 1182\\
 13  &  13^5  & 378,1 & 54,7  & 385275 & 38 &   2^2  &  29,7 & 203  & 65 \\
 14  &  7^2  & 110,3 & 3,55  & 196 & 38 &   5^2  &  593,2 & 1186  & 232 \\
 17  &  11^2  & 480,1 & 5,96  & 416  & 39 &   2^2  &  897,1 & 299  & 68\\
 19  &  2^2  & 545,1 & 545  & 40  & 39 &   17^2  &  104,3 & 39  & 1284\\
 22  &  3^2  & 1102,1 & 2,551  & 82 & 40 &   5^2  &  814,1 & 407  & 272\\
 24  &  11^2  & 306,1 & 34  & 530 & 40 &   19^2  &  1020,1 & 17,15  & 1440\\
 27  &  5^2  & 574,1 & 287  & 204 & 41 &   5^2  &  861,1 & 287  & 260\\
 28  &  5^2  & 23,7 & 161  & 202 & 42 &   3^2  &  1001,2 & 13,154  & 134\\
 29  &  5^2  & 1342,1 & 2,671  & 205 & 43 &   5^2  &  1343,1 & 1343  & 276\\
 30  &  5^2  & 1007,1 & 1007  & 216 & 44 &   2^2  &  1169,1 & 1169  & 70\\
 32  &  5^2  & 1246,1 & 2,623  & 214 & 45 &   5^2  &  696,1 & 87  & 280\\
 33  &  5^2  & 924,1 & 12,308  & 226 & 47 &   2^2  &  51,7 & 3,119  & 75\\
 34 &   3^2  &  638,1 & 319  & 114 & 48 &   5^2  &  2014,1 & 2,1007  & 306\\
 34 &   19^2  &  506,1 & 253  & 1300 & 49 &   2^2  &  1023,1 & 341  & 82\\
 \hline 
\end{array}
$
	\end{tabular}
	\captionof{table}{
Improved bounds for $\# X(\FF_q)$ 
among curves of genus $g$, here obtained with $X$ of the form $X(n_0,n_{\mathrm{ns}})/\langle W_{k_1},\ldots,W_{k_t} \rangle$ 
 with $n_0 n_{\textnormal{ns}}^2 \le 10000$ and $g\le 50$.
}
 \label{tab:record1}
\end{center}

\section{Explicit equations for two new maximal curves}\label{sc:computing-equation}

Among the curves we analyzed, we found two curves that are the first known curves, for their genera, to reach the Hasse-Serre-Weil upper bound over $\FF_{11^5}$. The two curves are $X(6,7)/\langle W_6,W_7\rangle$ and $X(156,1)/\langle W_{13}\rangle$: the first curve was unknown, to our knowledge, the second one already appeared in \cite{DLMS23}, but we have not computed explicit equations nor have we underlined the maximality. We give (singular) planar equations for both curves and we also give smooth equations for the first one, the one with lowest genus, using the canonical embedding.

\subsection{Canonical model of $\boldsymbol{X(6,7)/\langle W_6,W_7\rangle}$}
The following equations describe a canonical model of $X(6,7)/\langle W_6,W_7\rangle$ in $\PP^6$ over $\QQ$, which has genus $7$, it is smooth over $\ZZ[\tfrac{1}{42}]$ and reaches Serre's bound, as computed in \cref{tab:record1}.
\allowdisplaybreaks
{\small
\begin{align*}
& 10x_1^2 + 22x_2x_5 - 18x_2x_6 + 5x_2x_7 - 9x_3^2 - 14x_3x_4 - x_3x_5 - 4x_3x_6 + 7x_3x_7 - 4x_4^2 +\\
&\quad - 13x_4x_5 + 11x_4x_6 - 16x_4x_7 + 2x_5^2 - 17x_5x_6 - 7x_5x_7 + 10x_6^2 + x_6x_7 - x_7^2=0, \\
& -4x_1^2 + 28x_1x_2 - 56x_2x_5 + 49x_2x_6 + 17x_3^2 + 23x_3x_4 + 21x_3x_5 - 6x_3x_7 - 8x_4^2 +\\
&\quad - 2x_4x_5 - 2x_4x_6 - x_4x_7 - 16x_5^2 + 56x_5x_6 + 3x_5x_7 - 23x_6^2 + 13x_6x_7 - 7x_7^2=0, \\
& -6x_1^2 - 13x_1x_2 + x_1x_3 + 9x_2x_5 - 9x_2x_6 - 4x_2x_7 - x_3^2 - 9x_3x_5 + 3x_3x_6 - 3x_3x_7 + 7x_4^2 +\\
&\quad + 10x_4x_5 - 7x_4x_6 + 13x_4x_7 + 6x_5^2 - 13x_5x_6 + 4x_5x_7 + 3x_6^2 - 7x_6x_7 + 4x_7^2=0, \\
& -6x_1^2 - 21x_1x_2 + x_1x_4 + 15x_2x_5 - 18x_2x_6 - 9x_2x_7 - 3x_3^2 - 2x_3x_4 - 14x_3x_5 + 4x_3x_6 +\\
&\quad - 2x_3x_7 + 10x_4^2 + 16x_4x_5 - 10x_4x_6 + 15x_4x_7 + 12x_5^2 - 24x_5x_6 + 5x_5x_7 + 5x_6^2 - 13x_6x_7 + 6x_7^2=0, \\
& -3x_1^2 - 8x_1x_2 + x_1x_5 + 11x_2x_5 - 8x_2x_6 - x_2x_7 - 2x_3^2 - 2x_3x_4 - 6x_3x_5 + 2x_3x_6 + \\
&\quad - x_3x_7 + 4x_4^2 + 4x_4x_5 - 3x_4x_6 + 8x_4x_7 + 3x_5^2 - 11x_5x_6 + x_5x_7 + 4x_6^2 - 3x_6x_7 + 3x_7^2=0, \\
& -8x_1^2 - 2x_1x_2 + x_1x_6 - 13x_2x_5 + 12x_2x_6 - 2x_2x_7 + 6x_3^2 + 10x_3x_4 + 3x_3x_6 - 5x_3x_7 +\\
&\quad + 4x_4^2 + 9x_4x_5 - 8x_4x_6 + 13x_4x_7 - 2x_5^2 + 11x_5x_6 + 5x_5x_7 - 6x_6^2 + x_7^2=0, \\
& -6x_1^2 - 13x_1x_2 + x_1x_7 + 9x_2x_5 - 9x_2x_6 - 4x_2x_7 - x_3^2 - 9x_3x_5 + 3x_3x_6 - 3x_3x_7 + 7x_4^2 +\\
&\quad + 12x_4x_5 - 8x_4x_6 + 13x_4x_7 + 6x_5^2 - 13x_5x_6 + 4x_5x_7 + 3x_6^2 - 7x_6x_7 + 4x_7^2=0, \\
& -4x_1^2 - 7x_1x_2 + x_2^2 + 4x_2x_5 - 3x_2x_6 - x_2x_7 + x_3x_4 - 5x_3x_5 + 2x_3x_6 - 2x_3x_7 + 4x_4^2 +\\
&\quad + 7x_4x_5 - 5x_4x_6 + 8x_4x_7 + 2x_5^2 - 5x_5x_6 + 3x_5x_7 + x_6^2 - 3x_6x_7 + 2x_7^2=0, \\
& x_2x_3 - 6x_2x_5 + 3x_2x_6 - x_2x_7 + x_3^2 + 2x_3x_4 + x_3x_5 + 2x_4x_5 - x_4x_6 - 2x_4x_7 +\\
&\quad + 4x_5x_6 + x_5x_7 - 2x_6^2 - x_6x_7 - x_7^2=0, \\
& -4x_1^2 - 21x_1x_2 + x_2x_4 + 26x_2x_5 - 23x_2x_6 - 3x_2x_7 - 6x_3^2 - 7x_3x_4 - 15x_3x_5 + 3x_3x_6 +\\
&\quad - x_3x_7 + 9x_4^2 + 11x_4x_5 - 6x_4x_6 + 13x_4x_7 + 10x_5^2 - 29x_5x_6 + 3x_5x_7 + 10x_6^2 - 10x_6x_7 + 6x_7^2=0.
\end{align*}
}
\allowdisplaybreaks[0]
In \cite[file \texttt{\detokenize{Equations_curve_X(6,7)_mod_w6,w7.txt}}]{script_record_curves} we have written the above equations in Magma language, which we computed using the Magma script \texttt{\detokenize{Computation_of_equations.txt}} in the same repository.

In the remainder of this subsection we describe how we computed the equations above and prove the correctness of the computation.

Let $X = X(6,7)/\langle W_6, W_7\rangle$ be our maximal curve. 
To compute a canonical model it is enough to compute a basis of $\Omega^1(X)$, since the methods of \cite{Mer18} allow to find quadrics defining the image of a canonical embedding.
We have $X =  Y/\langle W_6\rangle$, where $Y = X(6,7)/\langle W_7\rangle$, and the idea is to compute $\Omega^1(X)$ as the subspace of $\Omega^1(Y)$ which is $W_6$-invariant. Since $Y$ is of the form $X_H$, the algorithm by David Zywina in \cite{Zyw21} (see also \cite[files \texttt{GL2GroupTheory.m} and \texttt{ModularCurves.m}]{Zywcode}) computes a basis of $S_2(\Gamma_H) = \Omega^1(Y)$. In our algorithm we numerically compute a candidate $S$ for $\Omega^1(X)$, then we compute the candidate equations (the ones above). Finally we prove, by geometric arguments, that the equations we found must be equations for $X$.

For the computation of $S$ we use that $W_6$ acts on the upper half-plane by the matrix $m_6 =  \left(\begin{smallmatrix}6&-1\\ 6&0\end{smallmatrix}\right)$. Given a basis $b_1, \ldots, b_{16}$ of $S_2(\Gamma_H) = \Omega^1(Y)$ we computed the values
\[
b_r(\tau_s) ,\quad (b_r|_2m_6)(\tau_s)
\]
with precision $10^{-10}$
for $\tau_s= \frac{s}{10g}+i\sqrt{\frac{1}{6}-\frac{s^2}{100g^2}}$, with $s=-15, \dots, 15$  (see \cite[Section 2.1, Equation 2.1.5]{Miy05} for the definition of $f|_2 m$ for $m\in\mathrm{GL}_2^+(\RR)$). 
By solving a linear system we computed $b_r|_2m_6$ as a linear combination of the $b_i$'s with approximated coefficient, denote it $(b_r|_2m_6)^\sim$. 
We know that $\Omega^1(X) = \Omega^1(Y)^{W_6}$ is generated by the forms $b_r+b_r|_2m_6$ and we define $S$, our numerical candidate for $\Omega^1(X)$, as the span of $b_r+(b_r|_2m_6)^\sim$; in the computation such forms $b_r+(b_r|_2m_6)^\sim$ turned out to be linearly dependent and we extracted a basis $s_1,\ldots,s_7$.
The values for $\tau$ are carefully chosen to have $\mathrm{Im}(\tau)$ as close as possible to $\mathrm{Im}(W_6\tau)$ and the coefficients of the linear combinations are rationals with small denominators (we look for denominators less than 40) up to a precision of $10^{-10}$ (using 5800 Fourier coefficients of the forms $b_r$). 

A first clue for $S$ being $\Omega^1(X)$ is that $S$ has dimension $7$. Using the method in \cite{Mer18} to compute canonical embeddings, we computed by \cite[file \texttt{\detokenize{Computation_of_equations.txt}}]{script_record_curves} quadratic relations which hold for a basis of forms $s_1, \ldots, s_7$ of $S$. Such quadratic relations are the equations above, which define a smooth curve over $\QQ$ of genus $7$ whose reduction modulo $11$ is smooth and has the expected number of points over $\FF_{11^5}$, which is the number of points of $X$ computed independently in \cref{sc:computing-points}, giving further clues. 

To turn clues into a proof, we notice that the differential forms $s_1, \ldots, s_7 \in \Omega^1(Y)$ define a map $Y \dashrightarrow X \subset \PP^6$, which is defined over $\QQ$ and has degree $2$ by Riemann-Hurwitz Formula (indeed $s_1, \ldots, s_7$, despite being chosen with an approximation process, were chosen in $\Omega^1(Y)$, and the equations above are quadratic relations that are exactly satisfied by the $s_i$'s). Hence $X$ is a quotient by an automorphism $u$ of $Y$ defined over $\QQ$. 
The automorphisms of a curve of genus at least $2$ are determined by their action on the Jacobian. Hence, it is enough to prove that $u = W_6$ in the Jacobian of $Y$. 
Applying \Cref{pr:chen_quotients_all}, we compute that the Jacobian $J_Y$ of $Y$ satisfies
$$J_Y \sim \prod_{i=1}^6 A_{h_i}^{m_i}$$ 
for certain distinct newforms $h_1, \ldots, h_6$ with multiplicities $m_i \in \{1,2\}$. 
Each $A_{h_i}$ is simple and non-isomorphic to the other factors. By \cite[Theorem 6]{Kani}, the ring of endomorphisms of $J_Y$ defined over $\QQ$, up to isogeny, is 
\begin{equation}\label{eq:EndJ_Y}
\End_\QQ^0(J_Y) = \End_\QQ(J_Y) \otimes \QQ \cong \prod_{i=1}^6 \End_\QQ^0(A_{h_i})^{m_i\otimes m_i} \cong  \prod_{i=1}^6 K_{h_i}^{m_i \times m_i} ,
\end{equation}
where $K_{h_i}$ is the field of coefficients of $h_i$ and in this concrete case is either $\QQ$ or a real quadratic extension of $\QQ$.
Using \eqref{eq:EndJ_Y}, we can write the action of $u$ on $J_Y$ as a collection of matrices $M_{u,i} \in K_{h_i}^{m_i \times m_i}$; analogously we have matrices $M_{W_2,i}$ and $M_{W_3,i}$ for the action of $W_2$ and $W_3$.

We now prove that $u$ commutes with $W_2$ or, equivalently, that for each $i$ the matrices $M_{u,i}$ and $M_{W_2,i}$ commute. 
Since $u$ and $W_2$ are involutions, the matrices $M_{u,i}$ and $M_{W_2,i}$ have order at most $2$, hence each matrix is either a multiple of the identity or a $2\times 2$ matrix with eigenvalues $\lambda_1=1$, $\lambda_2 = -1$. 
Suppose by contradiction that for some $i$ the matrices $M_{u,i}$ and $M_{W_2,i}$ do not commute. Then neither of them is a multiple of the identity, hence both of them have eigenvalues $1$ and $-1$. In particular $\det(M_{u,i}) = \det(M_{W_2,i})= -1$ and $\det(M_{u,i} M_{W_2,i}) = 1$. Now notice that the matrix $N_i := M_{u,i} M_{W_2,i}$ is a matrix of finite order, since, using \eqref{eq:EndJ_Y}, it describes the action of the automorphism $u W_2$ on a factor of the Jacobian of $Y$ and the group $\Aut(Y)$ is finite. Moreover, the order of $N_i$ is in $\{1,2,3,4,5,6,8,10,12\}$: indeed since $N_i$ is a $2\times 2$ matrix with entries in a quadratic extension of $\QQ$, its eigenvalues have degree over $\QQ$ dividing $4$; the only roots of unity with degree dividing $4$ are the ones whose order is in that list.
If $N_i$ has order $1$, then $M_{u,i}$ and $ M_{W_2,i}$ are inverse to each other, hence they commute. If $N_i$ has order $2$, using that $\det(N_i)=1$, we deduce that $N_i  = M_{u,i} M_{W_2,i} = - \Id$, again implying that $M_{u,i}$ and $ M_{W_2,i}$ commute. In the other cases we deduce that $u W_2$ has order a multiple of $3,4$ or $5$, contradicting the computations in \cite[file \texttt{\detokenize{Order_3_4_5_automorphisms_test.txt}}]{script_record_curves} that prove, using the test in \cite{Gon17}, that $Y$ has no automorphism of order $3$, $4$ or $5$.

Analogously $u$ commutes with $W_6$, i.e., with the whole $\langle W_2, W_6\rangle$.
Hence the elements of $\langle W_2, W_6\rangle$ descend to automorphisms of $X$ defined over $\QQ$.
If $u$ were not in $\langle W_2, W_6\rangle$, then all the elements of $\langle W_2, W_6\rangle$ would descend to distinct automorphisms of $X$ defined over $\QQ$, contradicting the Magma computations given by \cite[file \texttt{\detokenize{Check_non-modular_automorphisms_mod_5.txt}}]{script_record_curves}, which shows that $\# \Aut(X_{\FF_5})\le2$, hence $\# \Aut(X_{\QQ})\le2$ (indeed the composition $\Aut(X_\QQ) \to \Aut(X_{\FF_5}) \to \End(J_{\FF_5})$ is equal to $\Aut(X_\QQ) \to \End(J_{\QQ}) \to \End(J_{\FF_5})$ which is injective). Hence $u$ lies in the group $\langle W_2, W_6\rangle$. Furthermore, by looking at the genus of $Y/\langle W_2\rangle$ we exclude the case $u=W_2$ and by looking at the number of points of the quotient curves $Y/\langle W_3\rangle$ and $Y/\langle W_6\rangle$ over some finite field (computed in Examples \ref{ex_max} and \ref{ex_g=7}), we see that $u$ must be equal to $W_6$, i.e., that the above equations actually describe the curve $X = Y/\langle W_6\rangle$.

\subsection{A planar singular model for $\boldsymbol{X(6,7)/\langle W_6,W_7\rangle}$} \label{Sect4.1}

We can obtain a (singular) planar model of $X(6,7)/\langle W_6,W_7\rangle$ by projecting the canonical model onto a plane. In practice, we did it by a linear change of variables and eliminating all variables except the first three. Here is a planar equation for our curve, after dehomogenizing and reducing modulo~$11$:
\begin{align*}
& (9 x {-} x^{2}) y^{9}\! {+} (4 x^{3}\! {+} 4 x^{2}\! {+} 2 {-}1) y^{8}\! {+} (7 x^{4}\! {-} x^{3}\! {+} 4 x^{2}\! {+} 3) y^{7}\! {+} (5 x^{4}\! {+} 8 x^{3}\! {+} x^{2}\! {+} 2 x {+} 1) y^{6}\! {+} (8 x^{6}\! {+} 3 x^{5}\! {+} x^{4}\! {+} 9 x^{3}\! {+} 3 x {+} 5) y^{5}\! \\
&  \,\, {+} (9 x^{7}\! {+} 7 x^{6}\! {+} 2 x^{5}\! {+} x^{4}\! {+} 9 x^{3}\! {+} 4 x {+} 6) y^{4}\! {+} (8 x^{8}\! {+} 6 x^{7}\! {+} 8 x^{5}\! {-} x^{4}\! {+} 6 x^{3}\! {+} 8 x^{2}\! {+} 7 x {+} 3) y^{3}{+} (3 x^{8}\! {+} 8 x^{6}\! {+} 3 x^{5}\! {+} 5 x^{4}\! {+} 2 x^{3}\! {-} x^{2}\!  \\
& \,\, {+} x {+} 1) y^{2}\! {+} (2 x^{10\!} {+} 9 x^{9}\! {+} 9 x^{8}\! {+} 5 x^{7}\! {+} 6 x^{6}\! {+} 7 x^{5}\! {+} 6 x^{4}\! {+} 5 x^{2}\! {+} 6 x) y  \! {+} 7 x^{11}\! {+} 5 x^{10}\! {+} 2 x^{9}\! {-} x^{8}\! {+} 6 x^{7}\! {-} x^{6}\! {+} 7 x^{5}\! {+} 4 x^{3} = 0.
\end{align*}
As it can be computed using the dedicated Magma command, the above equation defines a curve whose normalization of the projective closure, which we denote $\widetilde X$, has genus $7$.  
It is not so hard to (re)prove, with the help of Magma, that $\widetilde X$  has $166666$ points over $\FF_{11^5}$: the closure in $\PP^2_{\FF_{11^5}}$ of the above planar model has $166665$ rational points of which three are singular: $(0:0:1)$, $(0:1:0)$ and $(9:7:1)$; the normalization $\widetilde X$ contains $4$ rational points lying over these $3$ singularities since all of them are nodal, the first two are split while the third is not.

The above equation can be found at \cite[file \texttt{\detokenize{Equations_curve_X(6,7)_mod_w6,w7.txt}}]{script_record_curves} in Magma readable format.

\subsection{Equations for $\boldsymbol{X(156,1)/\langle W_{13}\rangle}$.} 

In the family of curves we considered, there is another maximal curve, of genus 12, namely $X(156,1)/\langle W_{13}\rangle$, which has $170676$  over $\mathbb F_{11^5}$.
We also computed equations for its canonical embedding which can be found in Magma readable format at \cite[file \texttt{\detokenize{Equations_curve_X0(156)_mod_w13.txt}}]{script_record_curves}.

This curve is actually the same as $X_0(156)/\langle W_{13}\rangle$, thus to compute equations we used classical methods as those in \cite{Gal99} and \cite{Mer18}: 
\begin{itemize}
    \item We computed the $q$-expansion of a basis of eigenforms $\{f_1,\dots,f_g\}$ of $\Omega^1(X_0(156))$ up to $q^m$, with $m=180$.
    \item Using Magma's exact computation of the Atkin-Lehner eigenvalues, we computed a basis $g_1, \ldots, g_{12}$ of the space of invariants $\Omega^1(X_0(156))^{W_{13}} = \Omega^1(X)$ (again we computed the $q$-expansion of these forms up to $q^{180}$).
    \item We looked for quadratic polynomials $F_j$ such that $F_j(g_1, \ldots, g_{12}) = O(q^{180})$: since $F_j(g_1, \ldots, g_{12})$ is a section of the sheaf $(\Omega^1_X)^{\otimes2}$ which has degree $4g-4 = 44$, then $F_j(g_1, \ldots, g_{12})$ must be exactly $0$, otherwise it would have more than $44$ zeroes, which is absurd (see also \cite[Section 2.1, Lemma 2.2, p. 1329]{BGGP05}).
    \item After collecting enough $F_j$'s we realized that they cut a curve $C \subset \PP^{11}$ of genus $12$ and that this curve $C$ is isomorphic to $X$: indeed the image of the canonical map $X \to \PP^{11}$ given by the sections $g_i$ is contained in $C$ and, by Riemann-Hurwitz, the induced map $X \to C$ is an isomorphism.  
\end{itemize}

A planar singular model for the curve, when reduced modulo~$11$, is given by the following equation:
\newline
\newline
\parbox[b]{0.99\textwidth}{\raggedright\hangafter=1\hangindent=0.5em$\displaystyle
\, x^{19}\left( 6  {+}\allowbreak  2 y\right) +\allowbreak  x^{18}\left({-} 1  {+}\allowbreak  5 y  {+}\allowbreak  5 y^{2} \right)+ x^{17}\left(8  {-} y  {+}\allowbreak  7 y^{2}  {+}\allowbreak  y^{3} \right) +\allowbreak  x^{16}\left({-} y^{2}  {+}\allowbreak  4 y^{3}  {+}\allowbreak  y^{4} \right) +\allowbreak 
x^{15}\left(3 y  {+}\allowbreak  2 y^{2}  {+}\allowbreak  y^{5} \right) +\allowbreak 
x^{14}\left(4  {+}\allowbreak  6 y  {+}\allowbreak  2 y^{2}  {+}\allowbreak  9 y^{4}  {-} y^{5}  {+}\allowbreak  5 y^{6} \right) +\allowbreak 
x^{13}\left(2  {+}\allowbreak  y  {+}\allowbreak  8 y^{2}  {+}\allowbreak  2 y^{3}  {+}\allowbreak  y^{4}  {+}\allowbreak  3 y^{5}  {+}\allowbreak  y^{6}  {+}\allowbreak  3 y^{7} \right) +\allowbreak  x^{12}\big( 8  {+}\allowbreak  4 y  {+}\allowbreak  9 y^{2}  {+}\allowbreak  4 y^{3}  {+}\allowbreak  2 y^{4}
{+}\allowbreak  5 y^{5}  {+}\allowbreak  y^{6}  {+}\allowbreak  9 y^{7}  {-} y^{8} \big) +\allowbreak  x^{11}\left( 9  {-} y  {+}\allowbreak  3 y^{2}  {-} y^{3}  {+}\allowbreak  3 y^{4}  {+}\allowbreak  y^{5}  {+}\allowbreak  6 y^{6}  {+}\allowbreak  8 y^{7}  {+}\allowbreak  9 y^{8}  {+}\allowbreak  4 y^{9} \right) +\allowbreak  x^{10} \big( 6  {-} y  {+}\allowbreak  6 y^{2}  {-} y^{3}  {+}\allowbreak  6 y^{5}  {+}\allowbreak  
5 y^{6}  {-} y^{7}  {+}\allowbreak  6 y^{8}  {+}\allowbreak  6 y^{9}  {+}\allowbreak  4 y^{10} \big) +\allowbreak  x^9 \big( 9  {+}\allowbreak  4 y  {+}\allowbreak  8 y^{2}  {+}\allowbreak  y^{3}  {-} y^{4}  {+}\allowbreak  8 y^{5}  {+}\allowbreak  9 y^{6}  {+}\allowbreak  4 y^{7}  {+}\allowbreak  8 y^{8}  {+}\allowbreak  6 y^{10}  \big) +\allowbreak  x^8 \big( 2  {+}\allowbreak  3 y  {+}\allowbreak  9 y^{2}  {+}\allowbreak  2 y^{3}
{+}\allowbreak  4 y^{4}  {+}\allowbreak  6 y^{5}  {+}\allowbreak  2 y^{6}  {+}\allowbreak  4 y^{7}  {+}\allowbreak  9 y^{8}  {+}\allowbreak  4 y^{9}  {+}\allowbreak  6 y^{10}  {+}\allowbreak  7 y^{11}  {+}\allowbreak  4 y^{12} \big) +\allowbreak  x^7 \big( 5  {+}\allowbreak  8 y  {+}\allowbreak  6 y^{2}  {+}\allowbreak  8 y^{3}  {+}\allowbreak  6 y^{4}  {-} y^{5}  {+}\allowbreak  8 y^{6}  {+}\allowbreak  y^{7}  {+}\allowbreak  4 y^{8}  {+}\allowbreak  8 y^{9}  {+}\allowbreak 
    2 y^{10}  {+}\allowbreak  y^{11}  {+}\allowbreak  7 y^{13} \big) +\allowbreak 
x^6 \big(  1 {+}\allowbreak  9 y  {+}\allowbreak  3 y^{2}  {+}\allowbreak  4 y^{3}  {+}\allowbreak  3 y^{4}  {+}\allowbreak  y^{5}  {+}\allowbreak  6 y^{6}  {+}\allowbreak  4 y^{7}  {+}\allowbreak  7 y^{8}  {+}\allowbreak  7 y^{9}  {+}\allowbreak  4 y^{10}  {+}\allowbreak  2 y^{11}  {+}\allowbreak  4 y^{12}  {+}\allowbreak  2 y^{13}  {+}\allowbreak  5 y^{14} \big) +\allowbreak 
x^5 \big( 4  {+}\allowbreak  7 y  {+}\allowbreak  5 y^{2}  {+}\allowbreak  9 y^{3}  {-} y^{4}  {+}\allowbreak  2 y^{5}  {+}\allowbreak  2 y^{6}  {+}\allowbreak  3 y^{7}  {+}\allowbreak  3 y^{8}  {+}\allowbreak  y^{9}  {+}\allowbreak  5 y^{10}  {+}\allowbreak  8 y^{12}  {+}\allowbreak  9 y^{13}  {+}\allowbreak  3 y^{14}  {+}\allowbreak  2 y^{15} \big) +\allowbreak 
x^4 \big( 4  {-} y  {+}\allowbreak  3 y^{2}  {+}\allowbreak  5 y^{3}  {+}\allowbreak  2 y^{4}  {+}\allowbreak  6 y^{5}  {+}\allowbreak  7 y^{7}  {+}\allowbreak  y^{8}  {+}\allowbreak  5 y^{12}  {+}\allowbreak  5 y^{14}  {+}\allowbreak  7 y^{15}  {+}\allowbreak  9 y^{16} \big) {+}\allowbreak 
x^3 \big( 9  {-} y  {+}\allowbreak  9 y^{2}  {+}\allowbreak  6 y^{3}  {+}\allowbreak  6 y^{4}  {-} y^{6}  {+}\allowbreak  2 y^{7}  {+}\allowbreak  7 y^{8}  {+}\allowbreak  8 y^{9}  {+}\allowbreak  4 y^{10}  {+}\allowbreak  y^{11}  {+}\allowbreak  7 y^{12}  {+}\allowbreak  7 y^{13}  {+}\allowbreak  y^{14}  {+}\allowbreak  y^{15}  {+}\allowbreak  3 y^{16}  {+}\allowbreak  3 y^{17} \big) +\allowbreak 
x^2 \big( 4  {+}\allowbreak  9 y^{2}  {+}\allowbreak  y^{3}  {+}\allowbreak  6 y^{4}  {-} y^{5}  {+}\allowbreak  5 y^{6}  {+}\allowbreak  5 y^{7}  {-} y^{8}  {+}\allowbreak  y^{9}  {-} y^{10}  {+}\allowbreak  2 y^{11}  {+}\allowbreak  3 y^{13}  {+}\allowbreak  2 y^{15}  {+}\allowbreak  9 y^{16}  {+}\allowbreak  9 y^{17}  {+}\allowbreak  8 y^{18} \big) +\allowbreak 
x \big(3   {+}\allowbreak  7  y  {-}  y^{2}  {+}\allowbreak  3  y^{3}  {+}\allowbreak  4  y^{4}  {+}\allowbreak  9  y^{5}  {-}  y^{6}  {+}\allowbreak  8  y^{7}  {+}\allowbreak  3  y^{8}  {+}\allowbreak  3  y^{9}  {+}\allowbreak   y^{10}  {+}\allowbreak  8  y^{11}  {+}\allowbreak  4  y^{12}  {+}\allowbreak  5  y^{13}  {+}\allowbreak  3  y^{16}  {+}\allowbreak  9  y^{17}  {+}\allowbreak  3  y^{18}  {+}\allowbreak  6  y^{19}\big) +\allowbreak 
5  {+}\allowbreak  y  {+}\allowbreak  6 y^{2}  {+}\allowbreak  6 y^{3}  {+}\allowbreak  9 y^{4}  {+}\allowbreak  2 y^{6}  {-} y^{7}  {+}\allowbreak  2 y^{8}  {+}\allowbreak  3 y^{9}  {+}\allowbreak  7 y^{10}  {+}\allowbreak  7 y^{11}  {+}\allowbreak  3 y^{12}  {+}\allowbreak  3 y^{13}  {+}\allowbreak  6 y^{14}  {+}\allowbreak  9 y^{15}  {+}\allowbreak  3 y^{16}  {+}\allowbreak  2 y^{17}  {+}\allowbreak  3 y^{19}  {+}\allowbreak  y^{20} =0.
$}
\newline
\newline
It has been obtained from the canonical model by projection onto a plane, i.e., in practice by a linear change of variable plus variable elimination plus dehomogenization. The previous equation in Magma readable format can be found at \cite[file \texttt{\detokenize{Equations_curve_X0(156)_mod_w13.txt}}]{script_record_curves}.

\section{Other maximal quotient modular curves}\label{sc:other}

Besides the two curves analyzed in \Cref{sc:computing-equation}, inside the set of curves studied in \Cref{sc:computing-points} we found several other examples of maximal curves. We did not compute explicit equations for all these examples, since, as tracked by \cite{manypoints}, a maximal curve for that genus and field is already known. 
We collected these instances for genus larger than 1, and we computed the eigenvalues of Frobenius and the real Weil polynomial. The data is shown in Appendix~\ref{app:maxcurves}.

For some pairs (genus, finite field), we find many modular curves that are maximal and, in some cases, they have different real Weil polynomial. In particular this shows the existence of non-isomorphic maximal curves. 
This problem has also been studied by Rigato: in \cite{Rig10} she proved that for $q=2$ and $g\leq 5$ the maximal curves are all isomorphic over $\FF_2$ and she shows the existence of non-isomorphic maximal curves of genus 6 and 7.
We also point out the that all the maximal curves of genus 4 over $\FF_{2^7}$ are isomorphic by the 
uniqueness remark made by Everett Howe in \cite{manypoints} (see \cite{MPIA} for a static version). In particular all curves in the corresponding line of Table \ref{tabMax34} are isomorphic over $\FF_{2^7}$.

We now make more explicit a case of genus 4 where we observe more than one isomorphism class of maximal curves, and one case of genus 6 where explicit equations can be given.
\begin{ese}
In genus $4$ over the field $\FF_{2^2}$ we observe at least four isomorphism classes of maximal curves, among the curves we analyzed (see \Cref{tab:record1}). The following three curves have different real Weil polynomial (as in \cref{tabMax34}):
\begin{align*}
X(1, 11) =X_{\textnormal{ns}}(11): &\begin{cases}
x^{2} + x y + x z + x w + y^{2} + y z + y w + z^{2} + z w=0 \\
x^{2} z + x^{2} w + x z^{2} + x w^{2} + y^{3} + y z w + y w^{2} + z^{3} + z w^{2} + w^{3}=0,
\end{cases}\\
 X(81, 1)=X_0(81): &\begin{cases}
xw + y^2=0 \\
x^3 + y^3 + z^3 + z^2w + zw^2=0,
\end{cases}\\
X(251, 1)/\langle W_{251}\rangle = X_0^+(251): &\begin{cases}
x^2 + xy + yz + yw + zw=0 \\
x^2z + xz^2 + y^2w + yw^2=0.
\end{cases} 
\end{align*}
The equations of $X_{\textnormal{ns}}(11)$ can be found in \cite{lmfdb}, while equations (over $\QQ$) for the other curves were computed with the code \cite[file \texttt{\detokenize{Equations_other_maximal_curves.txt}}]{script_record_curves}.
Although we only found three distinct real Weil polynomials, the isomorphism classes are at least 4, in fact one can check with Magma that
\[
X(177, 1)/\langle W_{59}\rangle = X_0(177)/\langle W_{59}\rangle:\begin{cases}
x^2 + xy + y^2 + yz + z^2 + zw=0 \\
x^2z + xz^2 + xzw + y^2w + yzw + yw^2 + z^2w=0
\end{cases}
\]
is not isomorphic to $X_0(81)$ over $\FF_{2^2}$ even if they have the same real Weil polynomial.
\end{ese}

\begin{ese}
In genus $6$ and over the field $\FF_{2^2}$, we find only one curve. Since there are no explicit equations on website \cite{manypoints}, we display below some equations for the curve we found, computed with the code in \cite[file \texttt{\detokenize{Equations_other_maximal_curves.txt}}]{script_record_curves}.
\[
X(299, 1)/\langle W_{299}\rangle=X_0^+(299) :\begin{cases}
x_1^2 + x_2^2 + x_2x_4 + x_2x_6 + x_3x_4 + x_5x_6=0 \\
x_1x_2 + x_2^2 + x_2x_3 + x_2x_6 + x_3^2 + x_3x_5 + x_4^2 + x_5x_6=0 \\
x_1x_3 + x_2x_6 + x_5x_6=0 \\
x_1x_4 + x_2x_3 + x_3x_5 + x_3x_6 + x_4^2=0 \\
x_1x_5 + x_2x_3 + x_3^2 + x_3x_6 + x_4x_6 + x_5x_6=0 \\
x_1x_6 + x_2x_5 + x_3x_4 + x_3x_5 + x_3x_6=0.
\end{cases}
\]
\end{ese}

\appendix
\setcounter{equation}{0}
\renewcommand{\theequation}{\thesection.\arabic{equation}}

\section{Jacobians of quotient curves}\label{sc:appendix_genus}

The Atkin-Lehner automorphisms on a curve $X_0(n)$ and, slightly more in general, the automorphisms that we consider for a curve $X(n_0,n_\ns)$, form a group isomorphic to $(\ZZ/2\ZZ)^r$. 
In the general setting where $X$ is a smooth projective curve over a field with an action of $(\ZZ/2\ZZ)^r$, this appendix gives some elementary results regarding the Jacobians of the quotient curves $X/W$ for $W$ a subgroup of $(\ZZ/2\ZZ)^r$.

In this setting, all the possible curves $X/W$ form a lattice, for example when $r=3$ we get the following, where $\{b_1,b_2,b_3\}$ is a basis of $(\ZZ/2\ZZ)^3$.
\[
\begin{tikzcd}[column sep=0.8cm, every arrow/.append style={draw=gray}]
    & & & \boldsymbol{X} \arrow[dl] \arrow[dll] \arrow[d] \arrow[dr] \arrow[drr] \arrow[drrr] \arrow[dlll] & & & & & \\ 
    \boldsymbol{\frac{X}{\langle b_1 \rangle}} \arrow[d] \arrow[drr] \arrow[drrr] & 
    \frac{X}{\langle b_1 + b_2 \rangle} \arrow[dl] \arrow[drrr] \arrow[drrrr]& 
    \frac{X}{\langle b_1 + b_3 \rangle} \arrow[dl] \arrow[dr] \arrow[drrr] & 
    \boldsymbol{\frac{X}{\langle b_2 \rangle}} \arrow[dlll] \arrow[dll] \arrow[drrr]& 
    \frac{X}{\langle b_1 + b_2 + b_3 \rangle}  \arrow[dll] \arrow[dlll] \arrow[d]& 
    \frac{X}{\langle b_2 + b_3 \rangle}  \arrow[d] \arrow[dr] \arrow[dlll] & 
    \boldsymbol{\frac{X}{\langle b_3 \rangle}} \arrow[d] \arrow[dll] \arrow[dlll]& 
    \\ 
    \boldsymbol{\frac{X}{\langle b_1, b_2 \rangle}} \arrow[drrr] & 
    \frac{X}{\langle b_1 + b_3, b_2 \rangle} \arrow[drr] & 
    \frac{X}{\langle b_2 + b_3, b_1 \rangle} \arrow[dr] & 
    \boldsymbol{\frac{X}{\langle b_1, b_3 \rangle}} \arrow[d] & 
    \frac{X}{\langle b_1 + b_2, b_3 \rangle} \arrow[dl] & 
    \frac{X}{\langle b_2 + b_3, b_1 + b_2 \rangle} \arrow[dll] &
    \boldsymbol{\frac{X}{\langle b_2, b_3 \rangle}} \arrow[dlll] 
    \\ 
    & & & \boldsymbol{\frac{X}{\langle b_1,b_2, b_3 \rangle}} & & & & 
\end{tikzcd}
\]

Given $W$ a subgroup of $(\ZZ/2\ZZ)^r$, we denote 
\begin{equation}\label{eq:defJH}
    J_W := \mathrm{Jac}(X/W).
\end{equation}
In \Cref{th:jac_formula} we prove that a few Jacobians $J_{W_i}$ (in the above diagram they correspond to the bold curves) determine all the other $J_W$. 
In the case $r=2$ we have the following statement.
\begin{equation*} \label{eq:lattice_quot}  
\begin{tikzpicture}
[x=2cm,y=2cm]
\node at (0,1)   (A1)  {$X$};
\node at (-1.5,0)  (A2)  {$X/\langle \sigma \rangle$};
\node at (0,0)   (A3)  {$X/\langle \tau \rangle$};
\node at (1.5,0)   (A4)  {$X/\langle \sigma+\tau\rangle$};
\node at (0,-1)  (A5)  {$X/\langle \sigma,\tau\rangle$};
\draw[->] (A1) -- (A2);
\draw[->] (A1) -- (A3);
\draw[->] (A1) -- (A4);
\draw[->] (A2) -- (A5);
\draw[->] (A3) -- (A5);
\draw[->] (A4) -- (A5);
\end{tikzpicture}
\end{equation*}
\begin{lem}\label{lem:base_case}
Let $X$ be a smooth projective curve which is acted upon by the group $(\ZZ/2\ZZ)^2$, generated by elements $\sigma$ and $\tau$.
Denote by $J$ the Jacobian of $X$ and let $J_W$ be as in \eqref{eq:defJH}. Then, there is an isogeny 
\begin{align*}
J \times J_{\langle \sigma, \tau \rangle} ^2 \sim J_{\langle \sigma \rangle} \times J_{\langle \tau \rangle} \times J_{\langle \sigma + \tau \rangle}.
\end{align*}
\end{lem}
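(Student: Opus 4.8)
The plan is to work in the category of abelian varieties up to isogeny, where tensoring with $\QQ$ turns the rational group algebra $\QQ[G]$, with $G=\langle\sigma,\tau\rangle\cong(\ZZ/2\ZZ)^2$, into a ring acting on $J$. Since $G$ is abelian of exponent $2$, its four irreducible characters $\chi_0,\chi_\sigma,\chi_\tau,\chi_{\sigma+\tau}$ take values in $\{\pm1\}$, where $\chi_0$ is trivial and $\chi_\sigma,\chi_\tau,\chi_{\sigma+\tau}$ are the three nontrivial characters, each trivial on exactly one of the order-two subgroups $\langle\sigma\rangle,\langle\tau\rangle,\langle\sigma+\tau\rangle$. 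I would introduce the orthogonal idempotents $e_\chi=\tfrac14\sum_{g\in G}\chi(g)\,g\in\QQ[G]$ and the isotypic pieces $A_\chi:=e_\chi J$, so that $J\sim\prod_\chi A_\chi$.

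The central step — and the one I expect to be the only real obstacle — is the identification, for each subgroup $W\le G$, of the quotient Jacobian with the image of the averaging idempotent $\varepsilon_W:=\tfrac1{|W|}\sum_{w\in W}w$, namely $\mathrm{Jac}(X/W)\sim\varepsilon_W J$. This is standard: the pullback $\pi_W^*$ along $\pi_W\colon X\to X/W$ has finite kernel and identifies $\mathrm{Jac}(X/W)$ with the connected component of the $W$-fixed locus of $J$, which up to isogeny is exactly $\varepsilon_W J$; at the level of differentials this amounts to $H^0(X/W,\Omega^1)\cong H^0(X,\Omega^1)^W$. A direct character computation (orthogonality on $G/W$) gives $\varepsilon_W=\sum_{\chi\in W^\perp}e_\chi$, where $W^\perp$ is the group of characters trivial on $W$; hence $\mathrm{Jac}(X/W)\sim\prod_{\chi\in W^\perp}A_\chi$.

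Applying this to the five relevant subgroups I obtain $J_{\langle\sigma,\tau\rangle}\sim A_{\chi_0}$, $J_{\langle\sigma\rangle}\sim A_{\chi_0}\times A_{\chi_\sigma}$, $J_{\langle\tau\rangle}\sim A_{\chi_0}\times A_{\chi_\tau}$, $J_{\langle\sigma+\tau\rangle}\sim A_{\chi_0}\times A_{\chi_{\sigma+\tau}}$, together with $J\sim A_{\chi_0}\times A_{\chi_\sigma}\times A_{\chi_\tau}\times A_{\chi_{\sigma+\tau}}$. Comparing the two sides then reduces to the bookkeeping
\[
J\times J_{\langle\sigma,\tau\rangle}^2\sim A_{\chi_0}^3\times A_{\chi_\sigma}\times A_{\chi_\tau}\times A_{\chi_{\sigma+\tau}}\sim J_{\langle\sigma\rangle}\times J_{\langle\tau\rangle}\times J_{\langle\sigma+\tau\rangle},
\]
which is exactly the claimed isogeny. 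Equivalently, I could bypass the explicit pieces and simply verify the idempotent relation $\varepsilon_{\{1\}}+2\varepsilon_G=\varepsilon_{\langle\sigma\rangle}+\varepsilon_{\langle\tau\rangle}+\varepsilon_{\langle\sigma+\tau\rangle}$ in $\QQ[G]$ — a one-line check — and invoke the theorem of Kani--Rosen on idempotent relations, which converts it directly into the stated isogeny of Jacobians.
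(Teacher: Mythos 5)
Your proof is correct and is essentially the paper's own argument in group-algebra clothing: your isotypic pieces $A_\chi = e_\chi J$ are exactly the paper's simultaneous eigenspace subvarieties $J^{\epsilon_\sigma\,\epsilon_\tau}$, your key identification of $J_W$ with the connected component of the $W$-invariants of $J$ via pullback along $X\to X/W$ is the same step the paper invokes, and the final bookkeeping coincides (indeed your $J_{\langle\sigma+\tau\rangle}\sim A_{\chi_0}\times A_{\chi_{\sigma+\tau}}$, i.e.\ $J^{++}\times J^{--}$, is the correct version of the identity that the paper's proof misprints as $J^{+-}\times J^{-+}$). Your closing alternative --- checking $\varepsilon_{\{1\}}+2\varepsilon_G=\varepsilon_{\langle\sigma\rangle}+\varepsilon_{\langle\tau\rangle}+\varepsilon_{\langle\sigma+\tau\rangle}$ in $\QQ[G]$ and citing Kani--Rosen --- is a legitimate shortcut that bypasses the explicit decomposition, but it is an aside; the main route matches the paper's.
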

\begin{proof}
Given two signs $\epsilon_\sigma, \epsilon_\tau \in \{+, -\}$, denote by $J^{\epsilon_\sigma \, \epsilon_\tau}$ the connected component of the identity of the subgroup scheme  $\ker(\sigma^*-\epsilon_\sigma \Id ) \cap \ker(\tau^*-\epsilon_\tau \Id)$ in~$J$. In other words, $J^{\epsilon_\sigma\, \epsilon_\tau}$ is the maximal abelian subvariety of $J$ where $\sigma$ acts as $\epsilon_\sigma \Id$ and $\tau$ acts as $\epsilon_\tau \Id$. The inclusions $J^{\epsilon_\sigma\, \epsilon_\tau} \subset J$ induce an isogeny
\begin{equation} \label{eq:Jdecomposition}
J\sim J^{++} \times J^{+-} \times J^{-+} \times J^{--} \,.
\end{equation}
Finally recall that the Jacobian of $X/W$ is isogenous, by pullback of divisors, to the connected component of the identity of the $W$-invariant part of $J$. Hence our statement follows from \Cref{eq:Jdecomposition} together with the identities
\begin{align*}
J_{\langle \sigma, \tau \rangle} \sim J^{++}, \qquad 
J_{\langle \sigma \rangle} \sim J^{++} \times J^{+-}, \qquad 
J_{\langle \tau \rangle} \sim J^{++} \times J^{-+}, 
\qquad J_{\langle \sigma + \tau \rangle} \sim J^{+-} \times J^{-+}.
\end{align*}
\end{proof}
The following lemma generalizes \Cref{lem:base_case}. 
Given a subset $I$ of $(\ZZ/2\ZZ)^r$, we denote by $J_I = J_{\langle I\rangle}$ the Jacobian of $X/\langle I \rangle$, with $\langle I \rangle$ the subgroup generated by $I$. Moreover, the statement of the lemma contains a product $\prod A_i^{e_i}$ of abelian varieties $A_i$ with possibly negative exponents $e_i$: we interpret this product as a formal object living in the Grothendieck group of the category of abelian varieties up to isogeny; \emph{a posteriori}, the lemma itself implies that the product below is a real abelian variety, i.e., after decomposing every factor in irreducible abelian varieties, every irreducible factor appearing with a negative exponents cancels out.
\begin{lem}\label{lem:J_ind_step}
Let $X$ be a smooth projective curve with an action of $(\ZZ/2\ZZ)^r$ and let $\{b_1,\ldots,b_r\}$ be a basis of $(\ZZ/2\ZZ)^r$. 
For all subsets $K \subset (\ZZ/2\ZZ)^r$ and all non-negative integers $s\le r$, we have the following isogeny: 
\begin{align*}
J_{ K \cup \{b_1+\cdots+b_s\}}\sim J_{K}^{\tfrac{1+(-1)^{s}}{2}} \times \prod_{j=1}^s \, \Bigg(  \prod_{\substack{I\subseteq \{b_1,\ldots,b_s\}\\\# I =j}}  J_{I\cup K} \Bigg)^{(-1)^{j+s}\cdot2^{j-1}}.
\end{align*}
\end{lem}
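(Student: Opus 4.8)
The plan is to pass to the Grothendieck group of abelian varieties up to isogeny, where the formal product with negative exponents in the statement literally lives, and to reduce the whole identity to a computation carried out one character at a time. Following the proof of \Cref{lem:base_case}, I would first decompose $J=\mathrm{Jac}(X)$ under the action of $G:=(\ZZ/2\ZZ)^r$: for each character $\chi\colon G\to\{\pm1\}$ let $J^\chi\subseteq J$ be the maximal abelian subvariety on which every $g\in G$ acts as $\chi(g)\,\Id$. As in the base case, the inclusions induce an isogeny $J\sim\prod_\chi J^\chi$, and, since $J_W$ is isogenous by pullback of divisors to the connected component of the identity of the $W$-invariant part of $J$, one obtains the fundamental formula
\[
[J_W]=\sum_{\chi\in W^\perp}[J^\chi],
\]
where $W^\perp=\{\chi:\chi|_W=1\}$ is the annihilator of $W$ and $[\,\cdot\,]$ denotes the class in the Grothendieck group.

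With this formula both sides of the asserted isogeny become explicit integer combinations of the classes $[J^\chi]$, so it suffices to check that each $[J^\chi]$ occurs with the same coefficient on both sides. A character $\chi$ nontrivial on $\langle K\rangle$ contributes $0$ to every term, so I may assume $\chi\in K^\perp$. Setting $\epsilon_i:=\chi(b_i)\in\{\pm1\}$ and $p:=\#\{i\le s:\epsilon_i=1\}$, the left-hand class $[J_{K\cup\{b_1+\cdots+b_s\}}]$ contributes $1$ or $0$ according as $\chi(b_1+\cdots+b_s)=\prod_{i=1}^s\epsilon_i=(-1)^{s-p}$ equals $1$ or $-1$; that is, its coefficient is $\tfrac{1+(-1)^{s+p}}{2}$.

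On the right-hand side, for $\chi\in K^\perp$ one has $\chi\in(K\cup I)^\perp$ precisely when $I\subseteq\{b_i:\epsilon_i=1\}$, so each inner sum over $\#I=j$ contributes $\binom{p}{j}$ copies of $[J^\chi]$. Hence the right-hand coefficient is
\[
\frac{1+(-1)^s}{2}+\sum_{j=1}^s(-1)^{j+s}\,2^{j-1}\binom{p}{j}.
\]
Since $p\le s$, the sum effectively runs up to $p$, and by the binomial theorem it equals $\tfrac{(-1)^s}{2}\big((1-2)^p-1\big)=\tfrac{(-1)^s}{2}\big((-1)^p-1\big)$; adding $\tfrac{1+(-1)^s}{2}$ collapses the total to $\tfrac{1+(-1)^{s+p}}{2}$, matching the left-hand coefficient and proving the identity.

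The computation is essentially routine; the two points that need care are the passage to the Grothendieck group, which legitimizes the negative exponents and converts an isogeny into a linear identity, and the bookkeeping observation that the indices with $\epsilon_i=1$ lie in $\{1,\dots,s\}$, so $p\le s$ and the truncated alternating sum is a genuine evaluation of $(1-2)^p$. A pleasant byproduct is that, once the coefficients are shown to agree, every $[J^\chi]$ appears on the combined right-hand side with the coefficient $0$ or $1$ of the honest abelian variety $J_{K\cup\{b_1+\cdots+b_s\}}$, which is exactly the \emph{a posteriori} assertion that the formal product is a genuine abelian variety. An alternative but longer route would be induction on $s$ via repeated use of \Cref{lem:base_case} applied to the involutions $b_1+\cdots+b_{s-1}$ and $b_s$; the character computation avoids the attendant degenerate-case analysis.
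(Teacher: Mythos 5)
Your proof is correct, and it takes a genuinely different route from the paper's. The paper proves \Cref{lem:J_ind_step} by induction on $s$: it applies \Cref{lem:base_case} to the curve $X/\langle K\rangle$ with $\sigma=b_1+\cdots+b_{s-1}$ and $\tau=b_s$, and then re-indexes the resulting alternating sums until the inductive hypothesis can be absorbed. You instead diagonalize the whole $(\ZZ/2\ZZ)^r$-action at once: generalizing Equation \eqref{eq:Jdecomposition} from $r=2$ to arbitrary $r$, you write $J\sim\prod_\chi J^\chi$ over all $2^r$ characters, deduce the linear formula $[J_W]=\sum_{\chi\in W^\perp}[J^\chi]$ in the Grothendieck group, and reduce the lemma to matching the coefficient of each $[J^\chi]$, which collapses to the binomial identity $\sum_{j}\binom{p}{j}(-2)^{j}=(1-2)^p=(-1)^p$. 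What the paper's induction buys is that the only geometric input is the three-involution relation of \Cref{lem:base_case}, at the price of heavier combinatorial bookkeeping; what your argument buys is the elimination of the induction and of any case analysis (e.g.\ nothing special is needed when $b_1+\cdots+b_s$ already lies in $\langle K\rangle$), a transparent justification of the negative exponents, and, as you note, the paper's \emph{a posteriori} remark that the formal product is an honest abelian variety drops out for free, since the combined coefficients are $0$ or $1$. It is also worth observing that your method is precisely the character-eigenspace mechanism the authors themselves use to prove \Cref{pr:chen_quotients_all} (the multiplicities $d_\chi$ there play the role of your coefficients), so it unifies the appendix with the body of the paper. Two steps you state without proof --- the isogeny $J\sim\prod_\chi J^\chi$ (which follows from the idempotents $e_\chi=\tfrac{1}{2^r}\sum_{g}\chi(g)\,g\in\QQ[G]\subset\End^0(J)$) and the vanishing of the connected component of $(J^\chi)^W$ when $\chi|_W\neq 1$ (some involution acts as $-\Id$, whose fixed locus is finite) --- are asserted at the same level of detail as the corresponding claims in the paper's proof of \Cref{lem:base_case}, so this is not a gap.
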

\begin{proof}
To simplify formulas, we use the additive notation for the product of abelian varieties. 
The proof is by induction: the case $s=0$ is immediate, so we assume that the equality holds for $s-1$. By \Cref{lem:base_case} applied to the curve $X/\langle K \rangle$, with $\sigma = b_1+\cdots+b_{s-1}$ and $\tau = b_s$, we have
\begin{align*}
J_{\{ b_1+\cdots+b_s\} \cup K}&\sim J_{ K }+2J_{\{ b_1+\cdots+b_{s-1},b_s\} \cup K }-J_{\{ b_1+\cdots+b_{s-1}\} \cup K}-J_{\{ b_s\} \cup K}.
\end{align*}
By inductive hypothesis on the second and the third summand, we obtain
\allowdisplaybreaks
\begin{align*}
&J_{\{ b_1+\cdots+b_s\} \cup K}\sim  J_{K}+2\left(\tfrac{1-(-1)^{s}}{2}J_{\{b_s\} \cup K}+(-1)^{s}\sum_{j=1}^{s-1} (-2)^{j-1} \!\!\!\!\!\!\!\!\sum_{\substack{I\subseteq \{b_1,\ldots,b_{s-1}\}\\\#I=j}}\!\!\!\!\!\!\!\! J_{I\cup K\cup\{b_s\}}\right)+\\
& \qquad \qquad \qquad \qquad\qquad -\left(\tfrac{1-(-1)^{s}}{2}J_{K}+(-1)^{s}\sum_{j=1}^{s-1} (-2)^{j-1} \!\!\!\!\!\!\!\!\sum_{\substack{I\subseteq \{b_1,\ldots,b_{s-1}\}\\\#I=j}}\!\!\!\!\!\!\!\! J_{I\cup K}\right)-J_{\{b_s\} \cup K} 
\\
& \sim
\tfrac{1+(-1)^{s}}{2}J_{K}
+(-1)^{s+1}\sum_{j=1}^{s-1} (-2)^{j-1} \!\!\!\!\!\!\!\!\sum_{\substack{I\subseteq \{b_1,\ldots,b_{s-1}\}\\\#I=j}}\!\!\!\!\!\!\!\! J_{I\cup K} 
+(-1)^{s+1}\sum_{j=1}^{s-1} (-2)^{j} \!\!\!\!\!\!\!\!\sum_{\substack{I\subseteq \{b_1,\ldots,b_{s-1}\}\\\#I=j}}\!\!\!\!\!\!\!\! J_{I\cup K\cup\{b_s\}} 
+
(-1)^{s+1}J_{\{b_s\} \cup K}
\\
&\sim
\tfrac{1+(-1)^{s}}{2}J_{K}+(-1)^{s+1}
\sum_{j=1}^{s-1} (-2)^{j-1} \!\!\!\!\!\!\!\!\sum_{\substack{I\subseteq \{b_1,\ldots,b_{s}\}\\\#I=j\text{ and }b_s\notin I}}\!\!\!\!\!\!\!\! J_{I\cup K} 
+ (-1)^{s+1}\sum_{j=2}^{s} (-2)^{j} \!\!\!\!\!\!\!\!\sum_{\substack{I\subseteq \{b_1,\ldots,b_{s-1}\}\\\#I=j-1}}\!\!\!\!\!\!\!\! J_{I\cup K\cup\{b_s\}} + (-1)^{s+1}J_{\{b_s\} \cup K}
\\
&\sim
\tfrac{1+(-1)^{s}}{2}J_{K}
+(-1)^{s+1}\sum_{j=1}^{s} (-2)^{j-1} \!\!\!\!\!\!\!\!\sum_{\substack{I\subseteq \{b_1,\ldots,b_{s}\}\\\#I=j\text{ and }b_s\notin I}}\!\!\!\!\!\!\!\! J_{I\cup K}
+ (-1)^{s+1}\sum_{j=1}^{s} (-2)^{j-1} \!\!\!\!\!\!\!\!\sum_{\substack{I\subseteq \{b_1,\ldots,b_{s}\}\\\#I=j\text{ and }b_s\in I}}\!\!\!\!\!\!\!\! J_{I\cup K} 
\\
&\sim\tfrac{1+(-1)^{s}}{2}J_{K}+(-1)^{s+1}\sum_{j=1}^s (-2)^{j-1} \!\!\!\!\!\!\!\!\sum_{\substack{I\subseteq \{b_1,\ldots,b_s\}\\\#I=j}}\!\!\!\!\!\!\!\! J_{I\cup K}.
\end{align*}
\end{proof}
\begin{thm}\label{th:jac_formula}
Let $X$ be a smooth projective curve with an action of $(\ZZ/2\ZZ)^r$ and let $B$ be a basis of $(\ZZ/2\ZZ)^r$. 
For all subgroups $W$ of $(\ZZ/2\ZZ)^r$, the Jacobian $J_W$ is isogenous to an integer combination of the Jacobians $J_{\langle S\rangle}$, with $S$ varying among the subsets of $B$.
\end{thm}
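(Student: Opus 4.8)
The plan is to prove the more flexible statement that for \emph{every} finite subset $A$ of $(\ZZ/2\ZZ)^r$, the Jacobian $J_{\langle A\rangle}$ is isogenous to an integer combination (in the Grothendieck group of abelian varieties up to isogeny introduced before \Cref{lem:J_ind_step}) of the Jacobians $J_{\langle S\rangle}$ with $S\subseteq B$; the theorem then follows by taking $A$ to be any generating set of $W$. I would run an induction on a complexity measure recording how far the elements of $A$ are from being basis vectors. Writing each nonzero $a\in A$ as a sum of $w(a)$ distinct basis vectors (its \emph{weight}), I set $c(A):=\sum_{a\in A}(w(a)-1)$, the sum being over the nonzero elements of $A$; the zero element may be discarded since it does not change $\langle A\rangle$.

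In the base case $c(A)=0$ every nonzero element of $A$ has weight $1$, hence is a basis vector, so $A\setminus\{0\}\subseteq B$ and $J_{\langle A\rangle}=J_{\langle S\rangle}$ for $S:=A\setminus\{0\}$, a trivial integer combination. For the inductive step, suppose $c(A)>0$ and choose $v\in A$ with $w(v)=s\ge 2$; after reindexing the basis $B$ we may assume $v=b_1+\cdots+b_s$. Setting $K:=A\setminus\{v\}$, \Cref{lem:J_ind_step} expresses $J_{\langle A\rangle}=J_{\langle K\cup\{v\}\rangle}$ as an explicit integer combination of $J_{\langle K\rangle}$ and the Jacobians $J_{\langle K\cup I\rangle}$ with $I\subseteq\{b_1,\ldots,b_s\}$.

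The whole point of the induction is that the new generators produced by the lemma are the individual basis vectors $b_i$, which have weight $1$ and therefore do not contribute to the complexity. Concretely, $c(K)=c(A)-(s-1)$, and since adjoining weight-$1$ elements leaves the complexity unchanged one gets $c(K\cup I)=c(K)=c(A)-(s-1)$ for every $I\subseteq\{b_1,\ldots,b_s\}$. As $s\ge 2$, all these sets have strictly smaller complexity than $A$, so the inductive hypothesis applies to each: every $J_{\langle K\rangle}$ and $J_{\langle K\cup I\rangle}$ is an integer combination of the $J_{\langle S\rangle}$, $S\subseteq B$. Substituting these into the expression coming from \Cref{lem:J_ind_step} and collecting coefficients inside the Grothendieck group yields the same conclusion for $J_{\langle A\rangle}$, completing the induction.

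The only real subtlety, and the step I would verify most carefully, is precisely this complexity bookkeeping: one must confirm that \Cref{lem:J_ind_step} never reintroduces a generator of weight $\ge 2$ — which holds exactly because it decomposes the single high-weight generator $v$ into the basis vectors $b_1,\ldots,b_s$ — and that the reindexing of $B$ is harmless, since the target family $\{J_{\langle S\rangle}:S\subseteq B\}$ is invariant under permuting the basis. Everything else is formal manipulation in the Grothendieck group, where composing isogeny relations and collecting integer coefficients is automatic.
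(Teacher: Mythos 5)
Your proposal is correct and follows essentially the same route as the paper: the paper's proof is the one-line instruction to choose generators for $W$ and apply \Cref{lem:J_ind_step} repeatedly to each generator written as a sum of elements of $B$. Your weight-based complexity $c(A)$ simply makes rigorous the termination of that repeated application, which the paper leaves implicit, so this is a faithful (and slightly more careful) rendering of the intended argument.
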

\begin{proof}
Choose a set $I$ of generator for $W$ and apply \Cref{lem:J_ind_step} repeatedly to each element of $I$, which can be written as sum of elements of $B$.
\end{proof}
The above results imply analogous ``numerical'' corollaries. 
The next corollary, about genera, can also be proven directly, controlling the ramification of the quotient maps and applying Riemann-Hurwitz Formula.

\begin{cor}\label{cor:genera}
Let $X$ be a smooth projective curve with an action by $(\ZZ/2\ZZ)^r$ and let $B=\{b_1,\ldots,b_r\}$ be a basis of $(\ZZ/2\ZZ)^r$. For $I$ a subset of $(\ZZ/2\ZZ)^r$, we denote by $g_I$ the genus of $X/\langle I \rangle$.

For all subsets $K$  of $(\ZZ/2\ZZ)^r$ and all non-negative integers $s\le r$, we have
\begin{align*}
g_{\{b_1+\cdots+b_s\} \cup K}= \tfrac{1+(-1)^{s}}{2}g_{K}+(-1)^{s+1}\sum_{j=1}^s (-2)^{j-1} \!\!\!\!\!\!\!\!\sum_{\substack{I\subseteq \{b_1,\ldots,b_s\}\\\#I=j}}\!\!\!\!\!\!\!\! g_{I\cup K }.
\end{align*}
Moreover, for each subset $I$ of $(\ZZ/2\ZZ)^r$, the genus $g_I$ can be written as an integral linear combination of the genera $g_S$ with $S$ varying among all the subsets of $B$.
\end{cor}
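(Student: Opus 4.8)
The plan is to deduce both assertions directly from the isogeny relations already proved, by pushing them through the dimension functor. First I would recall the two standard facts that make this work: for any smooth projective curve $C$ one has $\dim \mathrm{Jac}(C) = g(C)$, and isogenous abelian varieties have equal dimension. Consequently the assignment $A \mapsto \dim A$ factors through isogeny and extends to a group homomorphism from the Grothendieck group of abelian varieties up to isogeny to $\ZZ$. Under this homomorphism each class $J_{\langle I\rangle}$ is sent to the genus $g_I$, products become sums, and the exponents (including the negative ones in \Cref{lem:J_ind_step}) become integer multiplicative coefficients.

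With this in place the first formula is immediate. Applying the dimension homomorphism to the isogeny of \Cref{lem:J_ind_step} replaces each factor $J_{I\cup K}$ by $g_{I\cup K}$ and each exponent $(-1)^{j+s}\cdot 2^{j-1}$ by the same coefficient, while the leading factor $J_K^{(1+(-1)^s)/2}$ becomes $\tfrac{1+(-1)^s}{2}g_K$. Since $(-1)^{j+s}2^{j-1} = (-1)^{s+1}(-2)^{j-1}$, this yields precisely the displayed identity. The second assertion follows in exactly the same manner from \Cref{th:jac_formula}: taking dimensions of the isogeny that writes $J_W$ as an integer combination of the $J_{\langle S\rangle}$ expresses $g_W$ as the corresponding integer combination of the $g_S$, with $S$ ranging over subsets of $B$.

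I do not expect any real obstacle here, as the substantive content lives at the level of Jacobians and has already been established in \Cref{lem:J_ind_step} and \Cref{th:jac_formula}. The only point worth a remark is that $\dim$ is genuinely well defined on the Grothendieck group and is compatible with the formal cancellation of negative exponents; this is automatic since $\dim$ is additive under products, so the \emph{a posteriori} integrality of the combination on the Jacobian side is inherited, and in fact one reads off the integrality of the genus combination directly from the integer exponents. As the statement itself notes, one could instead argue the genus formula from scratch via Riemann--Hurwitz, tracking the ramification of each degree-$2$ quotient map $X/\langle K\rangle \to X/\langle K, b_s\rangle$; but the dimension argument is shorter and reuses the work already done.
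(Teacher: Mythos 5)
Your proof is correct and is essentially the paper's own argument: the paper also deduces \Cref{cor:genera} from \Cref{lem:J_ind_step} and \Cref{th:jac_formula} by recalling that the genus of a curve equals the dimension of its Jacobian, which is an isogeny invariant additive under products. Your additional remarks (well-definedness of $\dim$ on the Grothendieck group and the sign identity $(-1)^{j+s}2^{j-1}=(-1)^{s+1}(-2)^{j-1}$) simply make explicit what the paper leaves implicit.
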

\begin{proof}
It follows by \Cref{lem:J_ind_step} and \Cref{th:jac_formula} recalling that the genus of a curve is the dimension of the Jacobian as an abelian variety.
\end{proof}
\begin{cor}\label{cor:overFF}
Let $X$ be a smooth projective curve over a finite field $\FF_q$, with an action of $(\ZZ/2\ZZ)^r$ and let $B=\{b_1,\ldots,b_r\}$ be a basis of $(\ZZ/2\ZZ)^r$. 

For all subsets $K$ of $(\ZZ/2\ZZ)^r$ and all non-negative integer $s\le r$, we have:
\begin{align*}
&\#(X/\langle \{b_1+\cdots+b_s \} {\cup} K \rangle)(\FF_q)= \tfrac{1+(-1)^{s}}{2}\#(X/\langle K \rangle)(\FF_q)+(-1)^{s+1}\!\sum_{j=1}^s (-2)^{j-1} \!\!\!\!\!\!\!\!\!\sum_{\substack{I\subseteq \{b_1,\ldots,b_s\}\\\#I=j}}\!\!\!\!\!\!\!\! \#(X/\langle I {\cup} K\rangle)(\FF_q).
\end{align*}
Moreover, given $I$ a subset of $(\ZZ/2\ZZ)^r$, then $\#(X/\langle I\rangle)(\FF_q)$ can be written as integral linear combination of $\#(X/\langle S\rangle)(\FF_q)$ with $S$ varying among all the subsets of $B$.
\end{cor}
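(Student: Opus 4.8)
The plan is to mimic the proof of \Cref{cor:genera}, replacing the dimension functional by the trace of Frobenius. For a smooth projective (geometrically connected) curve $C$ over $\FF_q$ and any prime $\ell\nmid q$, the Lefschetz trace formula gives
\[
\#C(\FF_q)=q+1-\tau(C),\qquad \tau(C):=\mathrm{tr}\bigl(\Frob_q\mid T_\ell\mathrm{Jac}(C)\otimes\QQ_\ell\bigr),
\]
where $\tau(C)$ is the sum of the Frobenius eigenvalues, an integer independent of $\ell$. Two features make $\tau$ behave like $\dim$: it is additive on products of abelian varieties, and it depends only on the $\FF_q$-isogeny class, since isogenous abelian varieties over $\FF_q$ share the same characteristic polynomial of Frobenius. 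Hence $A\mapsto\mathrm{tr}(\Frob_q\mid T_\ell A\otimes\QQ_\ell)$ factors through a group homomorphism from the Grothendieck group of abelian varieties over $\FF_q$ up to isogeny to $\ZZ$.

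First I would verify that the isogeny of \Cref{lem:J_ind_step} is valid over $\FF_q$, so that $\tau$ may be applied to it. This reduces to \Cref{lem:base_case}: the decomposition \eqref{eq:Jdecomposition} is realized by the projectors $\tfrac12(\Id\pm\sigma^*)$ and $\tfrac12(\Id\pm\tau^*)$, which are defined over $\FF_q$ because the whole $(\ZZ/2\ZZ)^r$-action is; thus each $J^{\epsilon_\sigma\epsilon_\tau}$ is Frobenius-stable, and both this decomposition and the pullback isogenies along the quotient maps are defined over $\FF_q$ and so commute with $\Frob_q$. Applying $\tau$ to the relation of \Cref{lem:J_ind_step} then yields
\[
\tau\bigl(J_{\{b_1+\cdots+b_s\}\cup K}\bigr)=\tfrac{1+(-1)^{s}}{2}\tau(J_K)+(-1)^{s+1}\sum_{j=1}^s(-2)^{j-1}\!\!\!\sum_{\substack{I\subseteq\{b_1,\ldots,b_s\}\\\#I=j}}\!\!\!\tau(J_{I\cup K}).
\]

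To deduce the first displayed identity I would substitute $\tau(J_W)=q+1-\#(X/\langle W\rangle)(\FF_q)$ and track the constant terms. The coefficient of $q+1$ on the right-hand side is $\tfrac{1+(-1)^{s}}{2}+(-1)^{s+1}\sum_{j=1}^s(-2)^{j-1}\binom{s}{j}$, which equals $1$ by the binomial identity $\sum_{j=0}^s\binom{s}{j}(-2)^j=(-1)^s$; this matches the single $q+1$ coming from the left, so all the $q+1$ contributions cancel and only the point-count relation survives. The main obstacle is exactly this bookkeeping of the affine shift $q+1$: for genera in \Cref{cor:genera} the functional $\dim$ is linear and there is no constant to control, whereas here the cancellation is what makes the statement true, and it works precisely because the coefficients sum to $1$.

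For the final assertion I would iterate the first identity, exactly as \Cref{th:jac_formula} iterates \Cref{lem:J_ind_step}: each application expresses one point count as an integral combination of others whose coefficients sum to $1$, and since such integral affine combinations are closed under composition, repeatedly rewriting $\langle I\rangle$ until only subgroups generated by subsets $S\subseteq B$ remain expresses $\#(X/\langle I\rangle)(\FF_q)$ as an integral (indeed affine) combination of the $\#(X/\langle S\rangle)(\FF_q)$.
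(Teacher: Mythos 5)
Your proof is correct and follows essentially the same route as the paper: apply the additivity and isogeny-invariance of the trace of Frobenius to the isogeny relation of \Cref{lem:J_ind_step} (iterated as in \Cref{th:jac_formula}), then translate traces into point counts via $\#C(\FF_q)=q+1-\mathrm{tr}(\Frob)$. You are in fact more explicit than the paper on two points it leaves implicit, namely that the isogenies are defined over $\FF_q$ (so they commute with Frobenius) and that the $q+1$ constants cancel because the coefficients sum to $1$.
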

\begin{proof}
It follows by \Cref{lem:J_ind_step} and \Cref{th:jac_formula} observing that if an abelian variety $J$ is isogenous to $J_1
\times J_2$, then the trace of Frobenius satisfies $\mathrm{tr}(\mathrm{Frob})=\mathrm{tr}(\mathrm{Frob_1})+\mathrm{tr}(\mathrm{Frob_2})$, where, for a prime power $q$, we denote by $\mathrm{Frob}$, $\mathrm{Frob}_1$ and $\mathrm{Frob}_2$ the $q$-th Frobenius on $J$, $J_1$ and $J_2$ respectively.
\end{proof}

\begin{ese}\label{ese:genus_appendix}
Suppose that $X$ is (a quotient of) a curve $X(n_0,n_{\ns})$ and let $\{q_1,\ldots,q_s\}$ be a set of coprime prime powers dividing exactly the level $n_0n_{\ns}$ of $X$. If we denote by $g(\cdot)$ the genus, we have
\[
g(X/\langle W_{q_1\cdots q_s}\rangle)= \tfrac{1-(-1)^{s+1}}{2}g(X)+(-1)^{s+1}\sum_{j=1}^s (-2)^{j-1} \!\!\!\!\!\!\!\!\sum_{\substack{I\subseteq \{W_{q_1},\ldots,W_{q_s}\}\\\#I=j}}\!\!\!\!\!\!\!\! g(X/\langle I \rangle).
\]
Using this formula we compute once again the genus of the record curve $X/\langle W_6\rangle$, with $X=X(6,7)/\langle W_7\rangle$. From the algorithm in \cite{DLMS23} we get: $g(X)=16$, $g(X/\langle W_2\rangle)=6$, $g(X/\langle W_3\rangle)=7$ and $g(X/\langle W_2,W_3\rangle)=2$.  
The above formula gives
$$g(X/\langle W_6\rangle)=g(X)-g(X/\langle W_2\rangle)-g(X/\langle W_3\rangle)+2g(X/\langle W_2,W_3\rangle)=16-6-7+4=7 ,$$
in accordance with Example \ref{ex_max} and \Cref{sc:computing-equation}.
\end{ese}

\section{Tables for the number of points}\label{app:bestresults}

In the following tables (\ref{tab2A}, \ref{tab2B}, \ref{tab3A}, \ref{tab5A}, \ref{tab7A}, \ref{tab11A}, \ref{tab13A}, \ref{tab17A}, \ref{tab19A}) we list the quotient curves $X(n_0, n_\ns)/\langle W_{d_1},\ldots,W_{d_k}\rangle$ achieving the largest number of points over each pair (genus, finite field). All items improving the actual best known bounds in \cite{manypoints} are in bold.

We explain the notation of the tables which was used to save space. 
Let
\[
n_0n_\ns = \prod_i p_i^{v_i} = \prod_i q_i
\]
be the prime factorization of $n_0n_\ns$, with $p_1<p_2< \ldots$ and $q_i = p_i^{v_i}$.
Then each exact divisor $d_h$ is uniquely the product of certain $q_{j}$'s and, in the tables, we denote $X(n_0, n_\ns)/\langle W_{d_1},\ldots,W_{d_k}\rangle$ as 
\[
(n_0,n_\mathrm{ns})\, \{ i_{1,1},\ldots, i_{1,m_1};\ldots \ldots \ldots; i_{k,1},\ldots, i_{k,m_k}\}
\]
where for each $h=1,\ldots, k$, 
\[
d_h=\prod_{j=1}^{m_h} q_{i_{h,j}}.
\]
As an example the entry ``$(945,1)\{2;1,3\},49$", in the table below corresponding to row $g=27$ and column $\mathbb{F}_{2^2}$, means that the quotient of $X(945,1)$ by $\langle W_5,W_{189}\rangle$ has 49 points over $\mathbb{F}_{2^2}$, where $d_1=5=p_2$ and $d_2=189=p_1^3 p_3$ since, in this case, $p_1=3$, $p_2=5$ and $p_3=7$.

Another example is the entry ``$(418, 3)\{2; 1, 3; 3, 4\},4261$", in Table \ref{tab5A}, row $g=41$ and column $\mathbb{F}_{5^5}$, that means that the quotient of $X(418,3)$ by $\langle W_3,W_{22},W_{209}\rangle$ has 4261 points over $\mathbb{F}_{5^5}$, where $d_1=3=p_2$ and $d_2=22=p_1 p_3$ and $d_3=209=p_3 p_4$ since, in this case, $p_1=2$, $p_2=3$, $p_3=11$ and $p_4=19$.

\tiny

\oddsidemargin=-1.9cm
\evensidemargin=-1.9cm
\textheight 24 cm

\newpage
\begin{table}
\[

\]
\caption{\mbox{Table for $\max\{|X(n_0,n_{\mathrm{ns}})/\langle W_{d_1},\ldots,W_{d_t}\rangle(\FF_q)|\}$ with $q=19^k$, $k=1,2,3,4,5$} and
$n_0 n_{\textnormal{ns}}^2 \le 10^4$}\label{tab19A}
\end{table}

\clearpage
\normalsize

\section{Tables for the maximal curves}\label{app:maxcurves}

In the following tables (\ref{tabMax2}, \ref{tabMax34}, \ref{tabMax>4}), we list the quotient curves $X(n_0, n_\ns)/\langle W_{d_1},\ldots,W_{d_k}\rangle$ with genus $g\ge 2$ achieving the maximal number of points for a certain pair (genus, field), as discussed in \cref{sc:other}. The curves are grouped by their real Weil polynomial $h_W(t)$ that is displayed as well.
The notation for the curves is the same as in the tables of \Cref{app:bestresults}. The two new maximal curves of \cref{sc:computing-equation} are in bold.

\tiny

\begin{table}[H]
\[
\begin{array}{llll}
g & q & h_W(x) & \text{Curves}  \\
\toprule
 2 & 2 & x^2+3x+1 & (67, 1) \{ 1 \} , \,\, (73, 1) \{ 1 \} , \,\, (93, 1) \{ 1 \} , \,\, (103, 1) \{ 1 \} , \,\, (115, 1) \{ 1 \} , \,\, (133, 1) \{ 1 \}  \\  
  \hline 
 2 & 2^2 & x^2+5x+9 & (23, 1) \{\}, \,\, (29, 3) \{ 1 ; 2 \} , \,\,(31, 1)\{\} , \,\, (31, 3) \{ 1, 2 \} , \,\, (69, 1) \{ 1, 2 \} , \,\,(87, 1) \{ 2 \} , \,\, (107, 1) \{ 1 \} , \,\, (125, 1) \{ 1 \}  , \,\,(161, 1) \{ 1 ; 2 \}  ,  \\
    &  & &  (167, 1) \{ 1 \} , \,\, (177, 1) \{ 1 ; 2 \} , \,\, (191, 1) \{ 1 \},\,\,(205, 1) \{ 1 ; 2 \}  , \,\, (213, 1) \{ 1 ; 2 \}, \,\, (221, 1) \{ 1 ; 2 \} , \,\, (287, 1) \{ 1 ; 2 \}, \,\,  (299, 1) \{ 1 ; 2 \}   \\
  \hline 
 2 & 2^7 & x^2 + 43x + 713 & (23, 1)\{\} , \,\, (29, 3)\{ 1 ; 2 \}, \,\, (31, 3)\{ 1, 2 \} , \,\, (69, 1)\{ 1, 2 \} , \,\, (107, 1)\{ 1 \} , \,\, (125, 1)\{ 1 \} , \,\, (161, 1)\{ 1 ; 2 \} , \,\,(167, 1)\{ 1 \} ,  \\
    &  & & (177, 1)\{ 1 ; 2 \} , \,\,(191, 1)\{ 1 \},\,\, (205, 1)\{ 1 ; 2 \}, \,\, (213, 1)\{ 1 ; 2 \}, \,\, (221, 1)\{ 1 ; 2 \} , \,\, (287, 1)\{ 1 ; 2 \} \\
  \hline 
 2 & 3 & x^2 +4x + 2 & (85, 1)\{ 1 ;2 \}, \,\, (85, 2)\{ 1 ; 2 ; 3 \}, \,\, (170, 1)\{ 1 ; 2;3 \} \\
 \cline{3-4} 
    &  & x^2 +4x + 3 & (37, 2)\{ 1, 2 \}, \,\, (53, 2)\{ 1, 2 \}, \,\, (58, 1)\{ 2 \} , \,\, (77, 1)\{ 1, 2 \} , \,\, (77, 2)\{ 1 ; 2, 3 \} , \,\, (77, 2)\{ 2; 1, 3 \} , \,\, (106, 1)\{ 1 ; 2 \} , \,\, (116, 1)\{ 1; 2 \} \\
 \cline{3-4} 
    &  & x^2 +4x + 4 & (28, 1)\{\} , \,\, (40, 1)\{ 2 \} , \,\, (43, 2)\{ 1, 2 \} , \,\, (65, 2)\{ 2 ; 1, 3 \} , \,\, (122, 1)\{ 1 ; 2 \} \\
  \hline 
 2 & 3^2 & x^2+10x+37 & (2, 5)\{\} , \,\, (4, 5)\{ 1, 2 \} ,\,\, (4, 5)\{ 1 \} ,  \,\,(7, 5)\{ 1, 2 \}  , \,\, (7, 10)\{ 1 ; 2, 3 \} , \,\, (11, 2)\{\} , \,\,(11, 4)\{ 2 \} , \,\,  (13, 4)\{ 1, 2 \}, \,\, (14, 5)\{ 1 ;2 ;3 \} ,  \\
    &  & & (22, 1)\{\},\,\,(29, 4)\{ 1 ;2 \} , \,\, (35, 2)\{ 1, 2 \} \,\, (44, 1)\{ 1 \} , \,\,(50, 1)\{\} , \,\, (89, 2)\{ 1, 2 \} , \,\,  (100, 1)\{ 1 \} , \,\,(103, 1)\{ 1 \} , \,\, (103, 2)\{ 1 ; 2 \}, \\
    &  & & (110, 1)\{ 1, 3 ; 2, 3 \} , \,\, (115, 1)\{ 1;2 \} , \,\, (115, 2)\{ 1 ;2;3 \} , \,\, (121, 1)\{ 1 \} , \,\, (121, 2)\{ 1 ; 2 \} , \,\,  (142, 1)\{ 2 \} , \,\,(143, 1)\{ 1, 2 \} ,  \\
    &  & &  (143, 2)\{ 1;2, 3 \} ,\,\,(158, 1)\{ 1 ; 2 \}, \,\, (161, 1)\{ 1 ; 2 \} , \,\, (161, 2)\{ 1 ;2;3 \}, \,\, (166, 1)\{ 1 ; 2 \} , \,\, (184, 1)\{ 1 ; 2 \} , \,\,(190, 1)\{ 2 ; 3 \} , \\
    &  & &(191, 1)\{ 1 \} , \,\, (191, 2)\{ 1; 2 \}, \,\, (205, 1)\{ 1 ;2 \}, \,\, (205, 2)\{ 1 ;2;3 \}, \,\, (206, 1)\{ 1; 2 \} , \,\, (230, 1)\{ 1 ;2;3 \} , \,\, (284, 1)\{ 1 ; 2 \}, \\
    &  & & (286, 1)\{ 1;2;3 \} , \,\, (380, 1)\{ 1;2;3 \} \\
  \hline 
 2 & 3^5 & x^2+62 x +1441 & (7, 5)\{ 1, 2 \} , \,\, (7, 10)\{ 1 ;2, 3 \}, \,\, (13, 4)\{ 1, 2 \}, \,\, (14, 5)\{ 1 ;2;3 \} , \,\, (22, 1)\{\} , \,\, (44, 1)\{ 1 \} , \,\, (89, 2)\{ 1, 2 \}, \,\, (103, 1)\{ 1 \} ,  \\
    &  & & (103, 2)\{ 1 ;2 \},\,\,(110, 1)\{ 1, 3 ; 2, 3 \} , \,\, (115, 1)\{ 1 ; 2 \}, \,\, (115, 2)\{ 1;2;3 \} , \,\,(121, 1)\{ 1 \} , \,\, (121, 2)\{ 1 ; 2 \} , \,\,(143, 1)\{ 1, 2 \} ,  \\
    &  & & (143, 2)\{ 1; 2, 3 \} , \,\, (158, 1)\{ 1 ;2 \},\,\, (161, 1)\{ 1 ;2 \}, \,\, (161, 2)\{ 1 ;2;3 \}, \,\, (166, 1)\{ 1 ; 2 \}, \,\, (190, 1)\{ 2;3 \}, \,\, (191, 1)\{ 1 \} ,  \\
    &  & &  (191, 2)\{ 1 ; 2 \} , \,\, (205, 1)\{ 1; 2 \}, \,\, (205, 2)\{ 1 ;2;3 \},\,\, (206, 1)\{ 1 ;2 \} , \,\, (230, 1)\{ 1 ;2;3 \} , \,\, (286, 1)\{ 1 ;2;3 \}, \,\, (380, 1)\{ 1 ;2;3 \} \\
  \hline 
 2 & 5 & x^2+6x+8 & (11, 6)\{ 3 ;1, 2 \} , \,\, (21, 4)\{ 2 ; 1, 3 \}, \,\, (22, 3)\{ 1; 2, 3 \} , \,\, (37, 2)\{ 1, 2 \} , \,\, (102, 1)\{ 1; 2, 3 \} , \,\,(129, 1)\{ 1 ; 2 \} , \,\, (129, 2)\{ 1;2;3 \} \\
 \cline{3-4} 
    &  & x^2+6x+9 & (57, 2)\{ 2; 1, 3 \}  , \,\,(61, 2)\{ 1, 2 \} , \,\, (67, 1)\{ 1 \} , \,\, (67, 2)\{ 1; 2 \}, \,\, (88, 1)\{ 1 ; 2 \} , \,\, (91, 1)\{ 1, 2 \} , \,\, (91, 2)\{ 1 ; 2, 3 \} , \,\, (134, 1)\{ 1 ;2 \} \\
  \hline 
 2 & 5^2 & x^2+20x+140 & (1, 12)\{ 2 \}  , \,\, (2, 7)\{ 1 \} , \,\, (7, 6)\{ 2, 3 \}, \,\, (8, 3)\{ 2 \} , \,\, (13, 3)\{ 1 \} , \,\,(13, 6)\{ 1 ; 2 \}, \,\,(27, 2)\{ 2 \}, \,\, (28, 1)\{\}, \,\, (28, 3)\{ 1, 3; 2, 3 \} , \\
    &  & & (47, 2)\{ 2 \}, \,\,(51, 4)\{ 1;2;3 \} , \,\, (98, 1)\{ 1, 2 \} \\
  \hline 
 2 & 5^3 & x^2+44x+724 & (14, 3)\{ 1; 3 \}  \\
  \hline 
 2 & 7 & x^2+8x+15 & (1, 15)\{ 1, 2 \} , \,\, (1, 30)\{ 1 ;2, 3 \} , \,\,(57, 2)\{ 3; 1, 2 \} \\
 \cline{3-4} 
    &  & x^2+8x+16 & (8, 3)\{ 2 \}  \\
  \hline 
 2 & 7^2 & x^2+28x+280 & (1, 16)\{ 1 \}  , \,\, (30, 1)\{ 2 \}, \,\, (48, 1)\{ 2 \} , \,\, (48, 1)\{ 1 \}, \,\, (60, 1)\{ 1; 2 \} \\
  \hline 
 2 & 11^2 & x^2+44x+704 & (1, 12)\{ 2 \} , \,\, (1, 30)\{ 2; 1, 3 \}, \,\, (5, 4)\{ 2 \}, \,\, (5, 6)\{ 2 ; 3 \} , \,\, (7, 4)\{ 2 \}, \,\, (8, 3)\{ 2 \}, \,\, (15, 2)\{ 3 \}, \,\, (21, 4)\{ 3 ;1, 2 \} , \,\,(25, 2)\{ 2 \} ,  \\
    &  & & (27, 2)\{ 2 \}, \,\, (28, 1)\{\},\,\,(40, 1)\{ 2 \} , \,\,(60, 1)\{ 3 ; 1, 2 \} , \,\, (102, 1)\{ 1 ; 2, 3 \}, \,\,(119, 2)\{ 1, 3 ; 2, 3 \} , \,\, (167, 1)\{ 1 \} , \,\, (167, 2)\{ 1 ; 2 \} \\
  \hline 
 2 & 11^5 & x^2+1604x+965284 & (2, 7)\{ 2 \} , \,\, (3, 7)\{ 1; 2 \}, \,\, (3, 14)\{ 1;2;3 \} , \,\, (4, 7)\{ 1 ; 2 \}, \,\, (6, 7)\{ 1 ;2;3\}, \,\, (31, 3)\{ 1, 2 \} , \,\,(31, 6)\{ 1 ;2, 3 \} , \,\, (39, 1)\{ 2 \} ,\,\,(39, 2)\{ 1 ; 3 \}, \\
    &  & &  (78, 1)\{ 1; 3 \}, \,\, (101, 2)\{ 1, 2 \} , \,\, (104, 1)\{ 1 ; 2 \} , \,\, (117, 1)\{ 1; 2 \} , \,\, (117, 2)\{ 1 ;2;3 \} , \,\,(147, 1)\{ 1 ;2 \}, \,\, (147, 2)\{ 1 ;2;3 \} \\
  \hline 
 2 & 13 & x^2+12x+35 & (1, 30)\{ 2; 1, 3 \}  \\
 \cline{3-4} 
    &  & x^2+12x+36 & (107, 1)\{ 1 \} , \,\, (107, 2)\{ 1 ;2 \} \\
  \hline 
 2 & 13^2 & x^2+52x+988 & (2, 7)\{ 2 \} , \,\, (4, 7)\{ 1;2 \} , \,\, (23, 3)\{ 1 ; 2 \} , \,\,(23, 6)\{ 1;2;3 \} , \,\, (46, 3)\{ 1;2;3 \} \\
  \hline 
 2 & 17^2 & x^2+12x+36 & (1, 12)\{ 2 \}, \,\,(1, 30)\{ 2; 1, 3 \}, \,\,(7, 6)\{ 2, 3 \} , \,\,(8, 3)\{ 2 \} , \,\, (27, 2)\{ 2 \} , \,\, (54, 1)\{ 1 \} , \,\, (161, 1)\{ 1 ;2 \} , \,\, (161, 2)\{ 1;2;3 \} ,  \\
    &  & & (191, 1)\{ 1 \} ,\,\,(191, 2)\{ 1 ; 2 \} \\
  \hline 
 2 & 19^2 & x^2+76x+2128 & (22, 1)\{\}, \,\, (29, 3)\{ 1 ;2 \} , \,\,(29, 4)\{ 1 ; 2 \} , \,\, (29, 6)\{ 1 ;2;3 \} , \,\, (33, 1)\{ 1 \} , \,\, (33, 2)\{ 1 ;2 \} , \,\, (44, 1)\{ 1 \} , \,\,(66, 1)\{ 1 ; 2 \} ,  \\
    &  & & (66, 1)\{ 1, 3 ; 2, 3 \},\,\, (66, 1)\{ 2 ; 1, 3 \}, \,\, (121, 1)\{ 1 \} , \,\,(121, 2)\{ 1; 2 \}
  \end{array}
\]
\caption{\mbox{Maximal curves over $\FF_q$ of genus $g=2$.}}\label{tabMax2}
\end{table}

\begin{table}
\[
\begin{array}{llll}
g & q & h_W(x) & \text{Curves}  \\
\toprule
 3 & 2 & x^3 + 4x^2 + 3x - 1 
 & (97, 1)\{ 1 \}  \\
  \hline 
 3 & 2^2 & x^3 + 9x^2 + 33x + 63 
 & (17, 5)\{ 1;2 \} , \,\, (45, 1)\{\}  , \,\, (55, 3)\{ 2 ; 1, 3 \}  , \,\, (61, 3)\{ 1 ;2 \}  , \,\,(63, 1)\{ 2 \} , \,\, (79, 3)\{ 1; 2 \} , \,\, (105, 1)\{ 2, 3 \} , \\
 & & & (195, 1)\{ 1; 2, 3 \}  , \,\, (249, 1)\{ 1; 2 \} \\
  \hline 
 3 & 3 & x^3 + 6x^2 + 9x
 & (52, 1)\{ 2 \}  \\
  \hline 
 3 & 3^2 & x^3 + 18x^2 + 126x + 432
 & (64, 1)\{\}  , \,\, (91, 2)\{ 3 \} , \,\, (124, 1)\{ 2 \} , \,\, (238, 1)\{ 1, 3; 2, 3 \} \\
  \hline 
 3 & 5^2 & x^3 + 30x^2 + 360x + 2200 
 & (19, 12)\{ 1 ;2;3\}  \\
  \hline 
 3 & 5^3 & x^3 + 66x^2 + 1812x + 26488 
 & (14, 3)\{ 3 \}, \,\, (28, 3)\{ 1;3 \} \\
  \hline 
 3 & 7^2 & x^3 + 42x^2 + 714x + 6272 
 & (9, 4)\{ 1 \} , \,\, (45, 1)\{\} , \,\, (45, 2)\{ 1 \}, \,\, (48, 1)\{\}, \,\, (62, 3)\{ 2 ; 3 \}, \,\, (64, 1)\{\} , \,\, (96, 1)\{ 1, 2 \} , \,\, (124, 3)\{ 1;2;3 \} \\
  \hline 
 3 & 11^2 &  x^3 + 66x^2 + 1782x + 25168 
 & (5, 6)\{ 2 \} , \,\, (5, 6)\{ 2, 3 \} , \,\, (15, 4)\{ 3 ;1, 2 \} , \,\, (25, 2)\{\} , \,\,(45, 2)\{ 2 ;3 \} , \,\, (56, 1)\{ 1 \} , \,\, (64, 1)\{\} ,  \\
    &  & & (69, 2)\{ 2 ; 3 \}, \,\,(105, 4)\{ 1; 2 ; 3;4 \}, \,\,(120, 1)\{ 3; 1, 2 \} \\
  \hline 
 3 & 11^5 & x^3 + 2406x^2 + 2412732x + 1290774088 
 & (52, 1)\{ 2 \} , \,\, (78, 1)\{ 3; 1, 2 \} , \,\,(156, 1)\{ 2 ;3 \}, \,\, (312, 1)\{ 1 ;2;3 \} \\
  \hline 
 3 & 17^2 & x^3 + 102x^2 + 4284x + 94792
 & (71, 2)\{ 2 \}  \\
  \hline 
 3 & 19^2 & x^3 + 114x^2 + 5358x + 132848 
 & (33, 1)\{\}  , \,\,(33, 2)\{ 1 \} , \,\, (55, 1)\{ 1 \} , \,\,(55, 2)\{ 1; 2 \} , \,\, (64, 1)\{\}, \,\, (66, 1)\{ 1, 2, 3 \} , \,\, (110, 1)\{ 2 ; 1, 3 \}  \\
  \hline 
 4 & 2^2 & x^4 + 10x^3 + 41x^2 + 96x + 172 
 & (1, 11)\{\}  , \,\, (1, 21)\{ 1, 2 \} , \,\, (25, 3)\{ 2 \} ,\,\,(135, 1)\{ 1, 2 \} , \,\, (225, 1)\{ 1; 2 \}, \,\, (231, 1)\{ 2 ; 1, 3 \}  , \,\,(285, 1)\{ 1 ;2, 3 \} \\
 \cline{3-4} 
    &  & x^4 + 10x^3 + 41x^2 + 100x + 188
    & (7, 15)\{ 1;2;3\} , \,\, (81, 1)\{\} , \,\, (177, 1)\{ 2 \} , \,\, (261, 1)\{ 1 ; 2 \}  \\
 \cline{3-4} 
    &  & x^4 + 10x^3 + 43x^2 + 110x + 205 
    & (29, 3)\{ 1, 2 \} , \,\,(161, 1)\{ 1, 2 \}  , \,\, (251, 1)\{ 1 \} , \,\, (321, 1)\{ 1 ;2 \}, \,\,(341, 1)\{ 1 ; 2 \}, \,\, (483, 1)\{ 1 ;2;3\} \\
  \hline 
 4 & 2^7 & x^4 + 86x^3 + 3275x^2 + 72154x + 1007077 
 & (29, 3)\{ 1, 2 \} , \,\, (161, 1)\{ 1, 2 \} , \,\, (251, 1)\{ 1 \}  , \,\, (321, 1)\{ 1 ; 2 \}  , \,\,(341, 1)\{ 1 ; 2 \}, \,\,(483, 1)\{ 1 ;2;3 \}  \\
  \hline 
 4 & 3^2 & x^4 + 20x^3 + 174x^2 + 860x + 2713 
 & (14, 5)\{ 1 ; 2, 3 \} , \,\, (44, 1)\{\}  , \,\, (110, 1)\{ 2, 3 \} , \,\, (115, 2)\{ 3; 1, 2 \} , \,\, (161, 2)\{ 3 ; 1, 2 \}  ,  \\
 & & &  (220, 1)\{ 1 ; 2, 3 \} , \,\, (286, 1)\{ 1 ; 2, 3 \} \\
  \hline 
 4 & 5^2 & x^4 + 40x^3 + 680x^2 + 6400x + 36800 
 & (16, 3)\{ 2 \}  , \,\, (27, 2)\{\} , \,\, (28, 3)\{ 2, 3 \} , \,\, (98, 1)\{ 1 \} , \,\, (188, 1)\{ 2 \}  \\
  \hline 
 4 & 7^2 & x^4 + 52x^3 + 1176x^2 + 15176x + 123872 
 & (60, 1)\{ 2 \} , \,\, (120, 1)\{ 2; 1, 3 \}  \\
 \cline{3-4} 
    &  & x^4 + 52x^3 + 1182x^2 + 15340x + 125497
    & (54, 1)\{\}  , \,\, (108, 1)\{ 1 \}  \\
  \hline 
 4 & 11^2 & x^4 + 88x^3 + 3344x^2 + 71632x + 950576 
 & (1, 15)\{ 1 \} , \,\, (1, 30)\{ 1; 2 \}, \,\,(15, 4)\{ 2 ; 3 \}  , \,\, (16, 3)\{ 2 \} , \,\,(25, 3)\{ 1 \} , \,\, (25, 6)\{ 1; 2 \} , \,\, (25, 6)\{ 2; 1, 3 \}  , \\
 & & & (27, 2)\{\}  , \,\,(32, 3)\{ 1 ;2 \},\,\,(60, 1)\{ 3 \} , \,\,(80, 1)\{ 2 \}  \\
  \hline 
 4 & 11^5 & x^4 + 3208x^3 + 4503384x^2 +
 & (39, 2)\{ 3 \}  \\
  & & + 3613247392x + 1812300626896  & \\
  \hline 
 4 & 17^2 & x^4 + 136x^3 + 8024x^2 + 268192x + 5571920 
 & (16, 3)\{ 2 \}  , \,\, (27, 2)\{\}, \,\, (31, 6)\{ 2 ; 3 \} , \,\, (54, 1)\{\}  , \,\, (108, 1)\{ 2 \} , \,\, (108, 1)\{ 1, 2 \} , \,\,(108, 1)\{ 1 \}  \\
  \hline 
 4 & 19^2 & x^4 + 152x^3 + 10032x^2 + 375440x + 8739088 
 & (66, 1)\{ 1, 2 \} 
 \end{array}
\]
\caption{\mbox{Maximal curves over $\FF_q$ of genus $3\le g\le 4$.}}\label{tabMax34}
\end{table}

\clearpage

\begin{table}[H]
\[
\begin{array}{llll}
g & q & h_W(x) & \text{Curves}  \\
\toprule
 5 & 2^2 & x^5 + 12x^4 + 62x^3 + 192x^2 + 436x + 864 
 & (209, 1)\{ 1, 2 \} , \,\, (645, 1)\{ 1 ;2;3 \}  \\
  \hline 
 5 & 11^2 & x^5 + 110x^4 + 5390x^3 + 154880x^2 + 2895530x + 37097632
 & (1, 30)\{ 2 ; 3 \} , \,\,(120, 1)\{ 1; 3 \}  \\
  \hline 
 5 & 11^5 & x^5 + 4010x^4 + 7237240x^3 + 7741577680x^2 + 5435351766080x +     2617242553904032
 & (3, 7)\{ 2 \}, \,\, (3, 14)\{ 1 ;3 \}  , \,\, (156, 1)\{ 3 ; 1, 2 \}  \\
  \hline 
 6 & 2^2 & x^6 + 15x^5 + 102x^4 + 425x^3 + 1254x^2 + 2925x + 6021
 & (299, 1)\{ 1, 2 \}  \\
  \hline 
 6 & 11^5 &  x^6 + 4812x^5 + 10614300x^4 + 14191614560x^3 +  & (78, 1)\{ 3 \}  , \,\,(104, 1)\{ 2 \}  , \,\, (156, 1)\{ 1 ; 3 \}  \\
 & & +12809583966480x^2 +  8223105443361792x + 3849697906629612544 & \\
  \hline 
 7 & 2 & x^7 + 7x^6 + 14x^5 - 2x^4 - 30x^3 - 16x^2 + 12x + 8 
 & (187, 1)\{ 1, 2 \}  \\ 
  \hline 
 7 & 2^2 & x^7 + 16x^6 + 116x^5 + 520x^4 + 1685x^3 + 4424x^2 + 10310x + 22880 
 & (189, 1)\{ 1, 2 \}, \,\, (455, 1)\{ 1 ; 2, 3 \}  \\
  \hline 
 7 & 7^2 & x^7 + 98x^6 + 4410x^5 + 120736x^4 + 2248022x^3 + 30194976x^2 + 303918580x     + 2367501248
 & (1, 16)\{\}  \\
  \hline 
 7 & 11^2 & x^7 + 154x^6 + 10934x^5 + 474320x^4 + 14051730x^3 + 301274512x^2 +     4843013868x + 59903475808 
 & (1, 24)\{ 2 \} , \,\,(25, 6)\{ 2 ; 3 \} , \,\,(64, 3)\{ 1 ; 2 \}  \\
  \hline 
 7 & 11^5 & x^7 + 5614x^6 + 14634564x^5 + 23479207640x^4 + 25900587487040x^3 + 
 & \mathbf{(6, 7)\{ 3;1,2 \}}  \\
 & &     +20781817917390432x^2 + 12507502676802272608x + 5736056081814890238208 & \\
  \hline 
 9 & 11^2 & x^9 + 198x^8 + 18414x^7 + 1068672x^6 + 43379226x^5 + 1308681792x^4 +     
 & (1, 30)\{ 2 \}  , \,\, (32, 3)\{ 2 \}  \\
 & & +30454499196x^3 + 560623333248x^2 + 8321568916914x + 101330949451072  & \\
  \hline 
 10 & 2^2 & x^{10} + 22x^9 + 227x^8 + 1488x^7 + 7099x^6 + 26790x^5 + 84949x^4 +     
 & (13, 15)\{ 1 ;2;3\}  \\
 & & +237900x^3 + 614564x^2 + 1518856x + 3689816  & \\
 \cline{3-4} 
    &  & x^{10} + 22x^9 + 227x^8 + 1476x^7 + 6907x^6 + 25398x^5 + 78709x^4 +   
    & (225, 1)\{ 2 \}  \\
    & & +  217680x^3 + 561476x^2 + 1395072x + 3415000  & \\
  \hline 
 10 & 3^2 & x^{10} + 44x^9 + 924x^8 + 12336x^7 + 117822x^6 + 860472x^5 + 5028348x^4 +  
 & (220, 1)\{ 2, 3 \}  \\
 & & +   24382968x^3 + 101458377x^2 + 374775308x + 1272392128 & \\
  \hline 
 10 & 17^2 & x^{10} + 340x^9 + 54740x^8 + 5548800x^7 + 397201600x^6 + 21349618368x^5 +  &  (108, 1)\{\} \\
 & & +    894614065600x^4 + 29960920387200x^3 + 816165461399840x^2  &  \\
  & & + +18327354677661760x + 343126733171222720 & \\
  \hline 
 12 & 11^5 & x^{12} + 9624x^{11} + 44383944x^{10} + 130535252320x^9 + 274862630948400x^8 +     & \mathbf{(156, 1)\{ 3 \}}  \\
 & & + 440993755828543104x^7 + 560170020610594432896x^6 +     577840514161469228879616x^5 + & \\
 & & + 492617240971238592986103360x^4 +     351365791059712247073125157750x^3 +  & \\
 & & +211472505871262576194557806678120x^2 +    108000279404968696858606142571834266x +  & \\
 & & +    46952382003014183461596141477941970132 & \\ 
  \hline 
 13 & 11^2 & x^{13} + 286x^{12} + 39182x^{11} + 3422848x^{10} + 214132490x^9 + 10215137504x^8 +     & (25, 6)\{ 2 \}  \\
 & & + 386406235644x^7 + 11901007829568x^6 + 304192864120002x^5 +     6546901285290080x^4 +   & \\
 & & +120036129906428164x^3 + 1893927675352782784x^2 +  25962335404137817828x +  & \\
 & & + 312310047332950820032 & \\
  \hline 
 19 & 11^2 & x^{19} + 418x^{18} + 84854x^{17} + 11145552x^{16} + 1064837026x^{15} +     78868905456x^{14} + 
 & (64, 3)\{ 2 \} \\
 & & +4713619036588x^{13} + 233624987892384x^{12} +  9795244689058202x^{11} +     & \\
 & & + 352657670048703536x^{10} + 11031136579467494292x^9 +    302625311123193214816x^8 +  & \\
 & & +7338131279130078069588x^7 +     158317948152609796199264x^6 +  & \\
 & & +3056722777699641740353752x^5 +     53095079109227126428981568x^4 +  & \\
 & & +833873235186826014803382474x^3 +     11900141855044287421899255472x^2 +   & \\
 & & +155111515626413902614636636040x +     1856794761749706495782311526412 & 
 \end{array}
\]
\caption{\mbox{Maximal curves over $\FF_q$ of genus $g\ge 5$.}}\label{tabMax>4}
\end{table}

\normalsize
\printbibliography

\end{document}